\newcommand{\Conf}{\mathrm{Conf}}
\DeclareMathOperator*{\im}{\mathrm{im}}
\definecolor{coloryellow}{RGB}{240,228,66}
\definecolor{colorskyblue}{RGB}{86,180,233}
\definecolor{colorvermillion}{RGB}{213,94,0}
\newcommand{\graphfont}{\mathsf}
\newcommand{\thetagraph}[1]{\graphfont{\Theta}_{#1}}
\newcommand{\completegraph}[1]{\graphfont{K}_{#1}}
\newcommand{\stargraph}[1]{\graphfont{S}_{#1}}
\newcommand{\graf}{\graphfont{\Gamma}}
\newcommand{\lollipopgraph}[1]{\graphfont{L}_{#1}}
\DeclareSymbolFont{sfletters}{OT1}{cmss}{m}{n}
\DeclareMathSymbol{\sTheta}{\mathord}{sfletters}{"02}
\theoremstyle{definition}
\newtheorem{definition}{Definition}[section]
\newtheorem{example}[definition]{Example}
\theoremstyle{plain}
\newtheorem{proposition}[definition]{Proposition}
\newtheorem{lemma}[definition]{Lemma}
\newtheorem{corollary}[definition]{Corollary}
\newtheorem{theorem}[definition]{Theorem}
\theoremstyle{remark}
\newtheorem{remark}[definition]{Remark}
    \DeclareFontFamily{U}{wncy}{}
    \DeclareFontShape{U}{wncy}{m}{n}{<->wncyr10}{}
    \DeclareSymbolFont{mcy}{U}{wncy}{m}{n}
    \DeclareMathSymbol{\Sha}{\mathord}{mcy}{"58}
\newsavebox{\foobox}
\title{Farber's conjecture for planar graphs}
\author{Ben Knudsen}
\email{b.knudsen@northeastern.edu}
\address{Department of Mathematics, Northeastern University, Boston, MA 02115, USA}
\keywords{Configuration spaces, topological complexity, braid groups, graphs}
\begin{document}
\maketitle

\begin{abstract}
We prove that the ordered configuration spaces of planar graphs have the highest possible topological complexity generically, as predicted by a conjecture of Farber. Our argument establishes the same generic maximality for all higher topological complexities. We include some discussion of the non-planar case, demonstrating that the standard approach to the conjecture fails at a fundamental level.
\end{abstract}

\section{Introduction}

The problem of multiple occupancy in a space $X$ is captured by the topology of the configuration spaces \[\Conf_k(X)=\{(x_1,\ldots, x_k)\in X^k: x_i\neq x_j \text{ if } i \neq j\}.\] The problem of movement within $X$ is reflected in its topological complexity $\mathrm{TC}(X)$, a numerical invariant introduced by Farber \cite{Farber:TCMP}, whose magnitude reflects the constraints in motion planning imposed by the shape of $X$.

Combining these ideas, one is led to study the topological complexity of configuration spaces as a measure of the difficulty of collision-free motion planning \cite{FarberGrant:TCCS, CohenFarber:TCCFMPS}. On the edge of a future of automated factories and autonomous vehicles, graphs form a natural class of background spaces in which to study this problem \cite{Ghrist:CSBGGR}. Configuration spaces of graphs have been the subject of considerable recent research, including partial results on their topological complexity \cite{Aguilar-GuzmanGonzalezHoekstra-Mendoza:FSMTMHTCOCST, Farber:CFMPG,Farber:CSRMPA,LuetgehetmannRecio-Mitter:TCCSFAGBG, Scheirer:TCUCSCG}. Nevertheless, a definitive calculation has remained conjectural for 15 years \cite[\S9]{Farber:CFMPG}.

The aim of this paper is to perform this calculation in the planar case; in fact, we will calculate all of the ``higher'' topological complexities $\mathrm{TC}_r$ as well \cite{Rudyak:HATC}. Write $m(\graf)$ for the number of essential (valence at least $3$) vertices of the graph $\graf$.

\begin{theorem}\label{thm:farber}
Let $\graf$ be a connected planar graph with $m(\graf)\geq2$. For $k\geq 2m(\graf)$, we have the equality \[\mathrm{TC}_r(\Conf_k(\graf))=rm(\graf).\]
\end{theorem}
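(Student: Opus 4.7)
The theorem asserts an equality, so I would prove matching upper and lower bounds.

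\textbf{Upper bound.} By the Abrams--Świątkowski discretization, $\Conf_k(\graf)$ deformation retracts onto a CW complex of dimension at most $m(\graf)$. Consequently $\mathrm{cat}(\Conf_k(\graf)) \leq m(\graf)$, and the standard inequality $\mathrm{TC}_r(X) \leq r\cdot\mathrm{cat}(X)$ yields $\mathrm{TC}_r(\Conf_k(\graf))\leq rm(\graf)$.

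\textbf{Lower bound.} I would apply the zero-divisor cup-length criterion: exhibiting $rm(\graf)$ classes in $H^*(\Conf_k(\graf)^r;\field)$, each restricting to zero on the diagonal, whose cup product is nonzero, forces $\mathrm{TC}_r(\Conf_k(\graf))\geq rm(\graf)$. My plan is to produce such classes vertex by vertex. For each essential vertex $v$ of $\graf$, I would use the planar embedding---which prescribes a cyclic ordering of the half-edges at $v$---to construct (at least) two distinguished degree-one cohomology classes $\omega_v,\eta_v \in H^1(\Conf_k(\graf); \field)$, whose cocycle representatives are localized in a neighborhood of $v$ in an appropriate combinatorial model. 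The hypothesis $k\geq 2m(\graf)$ ensures enough particles to realize such localized classes simultaneously at every essential vertex. Because squaring a degree-one zero-divisor always vanishes, a single $\omega_v$ only provides $r-1$ usable differences $\pi_i^*\omega_v - \pi_j^*\omega_v$; mixing $\omega_v$ with $\eta_v$, however, yields an $r$-fold zero-divisor product of degree $r$ concentrated at $v$, in the same vein as the zero-divisor calculation for a wedge of circles.

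\textbf{Main obstacle.} The crucial step is to verify that the cup product of these per-vertex contributions, one factor for each of the $m(\graf)$ essential vertices, is nonzero in top degree $rm(\graf)$. I would carry out this computation in an explicit model such as the Świątkowski chain complex, where the localized cocycles $\omega_v,\eta_v$ admit transparent representatives and their multiplicative interactions become combinatorially tractable. Planarity enters this step decisively: a globally coherent choice of cyclic orderings at the essential vertices, furnished precisely by a planar embedding, is what makes the contributions of distinct vertices multiplicatively independent in $H^*(\Conf_k(\graf)^r)$. In the non-planar setting no such coherent choice is available, consistent with the abstract's remark that the standard cohomological approach fails there.
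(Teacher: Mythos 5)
Your upper bound is fine and matches the paper's (the paper quotes the inequality $\mathrm{TC}_r(X)\leq rm$ for an $m$-dimensional complex directly rather than passing through $\mathrm{cat}$, together with \'{S}wi\k{a}tkowski's $m(\graf)$-dimensional model). The lower bound, however, has a genuine gap: you have correctly identified the zero-divisor cup-length criterion and the right count of classes, but the entire content of the theorem lies in the step you defer --- verifying that the product of $rm(\graf)$ zero-divisors is nonzero --- and the route you propose for that step is exactly the one the paper is designed to avoid. Computing cup products of ``localized'' classes directly in the \'{S}wi\k{a}tkowski (or Abrams) model requires control of the ring structure of $H^*(\Conf_k(\graf);\field_2)$, which is not available for general planar $\graf$; the paper states explicitly that its innovation is to circumvent ``our relative lack of knowledge of the cohomology of $\Conf_k(\graf)$.'' Your appeal to a ``globally coherent choice of cyclic orderings'' making the per-vertex contributions ``multiplicatively independent'' is not an argument: nothing in the proposal produces a homology class against which the product pairs nontrivially, and without that the nonvanishing claim is unsupported.

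The paper's mechanism is different in kind: planarity is used to produce a map $\Conf_k(\graf)\to\Conf_k(\mathbb{R}^2)$, and all cohomological computations are done in the target, where the classes $\alpha_{ij}$ and the ring structure are classical. The zero-divisors are pulled back from $\Conf_k(\mathbb{R}^2)^r$, and the witness is an explicit $d$-torus in each factor, built from the star-class circles in $\Conf_2(\stargraph{3})$ at $d$ essential vertices; Proposition \ref{prop:star is euclidean} (each such circle has degree $1$ under the Gauss map after a planar embedding) and the Kronecker pairing of $\alpha_\mu$ against torus classes (Lemma \ref{lem:kronecker}) do the detection. Your instinct that two classes per vertex are needed to reach $r$ factors is mirrored in the paper by the orthogonal pair of binary partitions $\lambda_1\perp\lambda_2$ (Lemmas \ref{lem:orthogonal} and \ref{lem:orthogonal exist}), but the independence you need is obtained there from the known structure of $H^*(\Conf_{2d}(\mathbb{R}^2))$, not from a computation on the graph side. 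One smaller point: over $\field_2$ the square of a degree-one zero-divisor $\alpha\otimes 1+1\otimes\alpha$ equals $\alpha^2\otimes 1+1\otimes\alpha^2$, so your assertion that it ``always vanishes'' requires knowing $\alpha^2=0$, which again presupposes control of the ring structure. To repair the proof you would either need to supply the cup-product computation in the graph model (a substantial and open-ended task) or adopt the push-forward-to-the-plane strategy.
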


The value $rm(\graf)$ is an upper bound for dimensional reasons, so the result should be read as saying that the maximum value is achieved generically. This generic maximality or stability is an example of a more widespread phenomenon awaiting systematic explanation.

The proof proceeds by establishing a general lower bound in Theorem \ref{thm:main}, which coincides with the upper bound in the range of interest. For this result, we adapt a now standard cohomological argument of \cite{Farber:CFMPG} (see also \cite{Aguilar-GuzmanGonzalezHoekstra-Mendoza:FSMTMHTCOCST,LuetgehetmannRecio-Mitter:TCCSFAGBG}). Our innovation is to work instead with the cohomology of the configuration spaces of the plane via a planar embedding. In this way, we circumvent the difficulty posed by our relative lack of knowledge of the cohomology of $\Conf_k(\graf)$ for general $\graf$.

Farber's conjecture is that Theorem \ref{thm:farber} holds in the case $r=2$ without the assumption of planarity. We discuss the non-planar case in Section \ref{section:non-planar graphs}, proving that the standard cohomological argument cannot possibly succeed (Theorem \ref{thm:non-planar}). We believe that genuinely new methods are required to (dis)prove the non-planar case of the conjecture.

\subsection{Conventions} Unless otherwise specified, (co)homology is implicitly taken with coefficients in $\mathbb{F}_2$. A graph is a finite CW complex of dimension $1$. Given an injective continuous map $\varphi:X \to Y$, we abuse the letter $\varphi$ in using it again to denote the induced map on configuration spaces.

\subsection{Acknowledgements} The author thanks Byung Hee An and Jesus Gonzalez for helpful conversations. Special thanks are due to Andrea Bianchi, who discovered the error in an incorrect proof of the general case of Farber's conjecture appearing in an earlier version. The author learned of Farber's conjecture at the AIM workshop ``Configuration spaces of graphs'' and was reminded of it at the BIRS--CMO workshop ``Topological complexity and motion planning.'' While writing, the author benefited from the hospitality of the MPIM and was supported by NSF grant DMS 1906174.

\section{Topological complexity}

Although we will not use it directly, we give the definition of topological complexity for the sake of completeness.

\begin{definition}
The $r$th \emph{topological complexity} of a space $X$, denoted $\mathrm{TC}_r(X)$, is one more than the minimal cardinality of an open cover $\mathcal{U}$ of $X$ with the property that the map \begin{align*}X^{[0,1]}&\longrightarrow X^r\\
\gamma&\mapsto \left(\gamma\left(\frac{0}{r-1}\right), \gamma\left(\frac{1}{r-1}\right),\ldots, \gamma\left(\frac{r-1}{r-1}\right)\right)
\end{align*} admits a local section over each member of $\mathcal{U}$.
\end{definition}

\begin{remark}
Our convention is that the topological complexity of a point is $0$ rather than $1$. Both choices are common in the literature.
\end{remark}

In this paper, we interact with topological complexity exclusively through two inequalities. Before stating them, we recall the following definition. As usual, we denote diagonal maps of spaces generically by $\Delta$.

\begin{definition}
Fix a topological space $X$. An element $\zeta\in H^*(X^r)$ is called an $r$-fold \emph{zero-divisor} if $\Delta^*\zeta=0$. The $r$th \emph{zero-divisor cup length} of $X$, denoted $\mathrm{zcl}_r(X)$, is the maximal cardinality of a set of $r$-fold zero-divisors whose cup product is nonzero.
\end{definition}

In view of the K\"{u}nneth isomorphism, we usually fail to distinguish between $H^*(X^r)$ and $H^*(X)^{\otimes r}$.

\begin{example}\label{example:bar notation}
Given $\alpha\in H^*(X)$, the element $\zeta(\alpha):=\alpha\otimes 1+1\otimes\alpha$ is a zero-divisor. More generally, given $1\leq a<b\leq r$, the element $\zeta^{ab}(\alpha):=\pi_{ab}^*(\bar \alpha)$ is an $r$-fold zero-divisor, where $\pi_{ab}:X^r\to X^2$ is the projection onto the $a$th and $b$th factors. Note that the parameter $r$ is implicit in this notation.
\end{example}

We have the following well-known result, which we do not state in the greatest possible generality. 

\begin{theorem}[{\cite{Farber:TCMP,BasabeGonzalezRudyakTamaki:HTCS}}]\label{thm:tc inequalities}
If $X$ has the homotopy type of a connected CW complex of dimension $m$, then \[\mathrm{zcl}_r(X)\leq \mathrm{TC}_r(X)\leq rm.\]
\end{theorem}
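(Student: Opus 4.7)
The plan is to treat the two inequalities separately, identifying $\mathrm{TC}_r(X)$ (up to a convention-dependent shift) with the Schwarz sectional category of the evaluation fibration $e_r : X^{[0,1]} \to X^r$, which is weakly equivalent over $X^r$ to the diagonal $\Delta : X \to X^r$ via the constant-path section.

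For the lower bound $\mathrm{zcl}_r(X) \leq \mathrm{TC}_r(X)$, I would begin with an open cover $\{U_1, \ldots, U_n\}$ of $X^r$ together with local sections $s_i : U_i \to X^{[0,1]}$ realizing the topological complexity. The first key step is a factorization lemma: the inclusion $U_i \hookrightarrow X^r$ is homotopic to a map that factors through $\Delta$. Explicitly, writing $\gamma_u = s_i(u)$, the family obtained by evaluating $\gamma_u$ at the reparametrized times $(1-t)\cdot (j-1)/(r-1)$ for $j=1,\ldots,r$ recovers the identity inclusion at $t=0$ and collapses at $t=1$ to the composition of $u \mapsto \gamma_u(0)$ with $\Delta$. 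As an immediate corollary, any $r$-fold zero-divisor $\zeta \in H^*(X^r)$ restricts trivially to each $U_i$, and hence lifts to a relative class in $H^*(X^r,U_i)$. A standard relative cohomology argument then places the cup product of lifts of $k$ such zero-divisors inside $H^*(X^r, U_1 \cup \cdots \cup U_k)$; when the $U_i$ cover $X^r$, this group vanishes, forcing any sufficiently long product of zero-divisors to vanish, which yields $\mathrm{zcl}_r(X)\leq \mathrm{TC}_r(X)$.

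For the upper bound $\mathrm{TC}_r(X) \leq rm$, I would replace $X$ by a CW model of dimension $m$ in its homotopy type, whereupon $X^r$ has the homotopy type of a CW complex of dimension $rm$. The conclusion then follows from Schwarz's classical dimensional estimate: for any fibration over a CW base of dimension $d$, the sectional category is bounded by $d$. The proof of that estimate is a cellular induction, extending local sections over a neighborhood of one skeleton to a neighborhood of the next using the homotopy lifting property cell by cell.

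I expect the main subtlety to be the relative-cohomology bookkeeping in the lower bound: one must arrange that the covering sets are sufficiently regular (by passing to closed refinements or by thickening appropriately) so that the long exact sequences of the pairs $(X^r, U_i)$ behave as expected, and one must verify that cup product in relative cohomology is compatible with the induction on $k$. Both points are classical and are handled cleanly in \cite{Farber:TCMP} and, in the higher-$r$ generalization, in \cite{BasabeGonzalezRudyakTamaki:HTCS}, from which the full argument can be imported.
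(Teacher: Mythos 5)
The paper does not prove this statement---it is imported without proof from the cited references---so there is no internal argument to compare against. Your sketch is the correct standard one from exactly those sources: the relative-cup-product argument of Farber for the lower bound (with the key homotopy deforming each $U_i$ into the diagonal via the reparametrized section, correctly set up) and the Schwarz/cellular dimension bound on sectional category for the upper bound.
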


We close this section with a simple device for interacting with zero-divisors.

\begin{definition}
A \emph{witness} for the cohomology class $\alpha\in H^*(A)$ is a map $f$ with target $A$ such that $f^*\alpha\neq0$.
\end{definition}

The following result allows inequalities involving zero-divisor cup lengths to be moved between spaces.

\begin{lemma}\label{lem:witness}
Let $\zeta_1,\ldots, \zeta_s\in H^*(Y^r)$ be $r$-fold zero-divisors and $f:X^r\to Y^r$ a continuous map. If a witness for the product $\zeta=\zeta_1\cdots\zeta_s$ factors through $f$, then $\mathrm{zcl}_r(X)\geq s$.
\end{lemma}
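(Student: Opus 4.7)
The plan is to exhibit $s$ explicit $r$-fold zero-divisors of $X$ whose cup product is nonzero, namely the pullbacks $f^*\zeta_1,\ldots,f^*\zeta_s$. The argument has two parts: checking that each $f^*\zeta_i$ is itself a zero-divisor, and checking that their cup product does not vanish.

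For the first part, I would note that the lemma is implicitly intended for settings (like the planar embedding $\varphi:\graf\to\bbR^2$ used later in the paper) in which $f$ is the $r$-fold power of some $g:X\to Y$, so that $f\circ\Delta_X=\Delta_Y\circ g$. Naturality of the diagonal pullback then gives $\Delta_X^*(f^*\zeta_i)=g^*(\Delta_Y^*\zeta_i)=0$, so each $f^*\zeta_i$ is indeed an $r$-fold zero-divisor for $X$.

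For the second part, set $\zeta=\zeta_1\cdots\zeta_s$ and let $h:Z\to Y^r$ be a witness for $\zeta$, so $h^*\zeta\neq 0$. By hypothesis $h=f\circ h'$ for some $h':Z\to X^r$, whence $(h')^*(f^*\zeta)=h^*\zeta\neq 0$, forcing $f^*\zeta\neq 0$. Since $f^*$ is a ring homomorphism, $f^*\zeta=f^*\zeta_1\cdots f^*\zeta_s$, providing the required nonzero cup product of $s$ zero-divisors and hence $\mathrm{zcl}_r(X)\geq s$.

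I do not anticipate a serious obstacle: the lemma is essentially a formal consequence of the naturality of cup products and of the vanishing of zero-divisors under the diagonal pullback. The only delicate point is the implicit assumption that $f$ respects diagonals; if $f$ were genuinely arbitrary the conclusion could fail, so the statement should be read as applying to $f$ of product form $g^r$, which is precisely how it will be invoked in the main application.
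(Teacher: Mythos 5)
Your proof is correct and follows essentially the same route as the paper's: pull back along $f$, observe that the factorization of the witness shows $f^*\zeta=\prod_i f^*\zeta_i\neq 0$, and note that each $f^*\zeta_i$ is a zero-divisor by naturality of the diagonal. Your caveat that this last step implicitly requires $f$ to be diagonal-compatible (e.g.\ of the form $g^r$, as in every application in the paper) is a fair and accurate reading of what the paper leaves tacit.
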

\begin{proof}
If $f\circ g$ is a witness for $\zeta$, then $g$ is a witness for $f^*\zeta=\prod_{i=1}^s f^*\zeta_i$. Since $f^*\zeta_i$ is an $r$-fold zero-divisor for each $i$ by naturality of the diagonal, the claim follows.
\end{proof}

\section{Euclidean configuration spaces}

In this section, we recall some standard facts pertaining to the cohomology of the ordered configuration spaces of $\mathbb{R}^n$, where $n>1$. Although the language differs, our perspective is heavily influenced by the beautiful and systematic treatment of \cite{Sinha:HLDO}. 

We begin by recalling that, for each $1\leq i\neq j\leq k$, one has the Gauss map \[\gamma_{ij}:\Conf_k(\mathbb{R}^n)\to S^{n-1}\] sending the configuration $(x_1,\ldots, x_k)$ to the unit vector from $x_i$ to $x_j$. We obtain a class $\alpha_{ij}\in H^{n-1}(\Conf_k(\mathbb{R}^n))$ by pulling the fundamental class of $S^{n-1}$ back along $\gamma_{ij}$. Note that our notation does not reflect the dependence on $k$ (or on $n$).

In the case $k=2$, the map $\gamma:=\gamma_{12}$ is a homotopy equivalence. More specifically, considering the antipodal embedding \begin{align*}
S^{n-1}&\xrightarrow{\iota} \Conf_2(\mathbb{R}^n)\\
x&\mapsto (-x,x),
\end{align*} we have the following standard result.

\begin{lemma}\label{lem:two points}
The composite $\gamma\circ\iota$ is the identity, and the composite $\iota\circ\gamma$ is homotopic to the identity.
\end{lemma}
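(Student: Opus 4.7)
The plan is a straightforward verification in two parts. For the first claim, I would just compute directly: if $x\in S^{n-1}$, then $\iota(x)=(-x,x)$, and so
\[
\gamma\circ\iota(x)=\gamma(-x,x)=\frac{x-(-x)}{|x-(-x)|}=\frac{2x}{2|x|}=x,
\]
using $|x|=1$. So $\gamma\circ\iota=\mathrm{id}_{S^{n-1}}$ on the nose.

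For the second claim, the strategy is to write down an explicit straight-line style homotopy that simultaneously translates the configuration so its center of mass is at the origin and normalizes the distance between the two points to $2$. Concretely, given $(x_1,x_2)\in\Conf_2(\mathbb{R}^n)$, set $c=\tfrac{1}{2}(x_1+x_2)$ and $v=\tfrac{1}{2}(x_2-x_1)$, so that $(x_1,x_2)=(c-v,c+v)$ with $v\neq 0$, and $\iota\circ\gamma(x_1,x_2)=(-v/|v|,v/|v|)$. Define
\[
H_t(x_1,x_2)=\bigl((1-t)c-\lambda_t\tfrac{v}{|v|},\ (1-t)c+\lambda_t\tfrac{v}{|v|}\bigr),\qquad \lambda_t=(1-t)|v|+t.
\]
Then $H_0=\mathrm{id}$ and $H_1=\iota\circ\gamma$ by construction, and the map $(t,x_1,x_2)\mapsto H_t(x_1,x_2)$ is continuous.

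The only nontrivial thing to verify, and the \emph{main obstacle} in an otherwise routine argument, is that $H_t$ genuinely lands in $\Conf_2(\mathbb{R}^n)$ for every $t\in[0,1]$, i.e.\ that the two coordinates never collide. The separation vector of the two coordinates of $H_t(x_1,x_2)$ is $2\lambda_t\tfrac{v}{|v|}$, so it suffices to check $\lambda_t>0$. Since $\lambda_t=(1-t)|v|+t$ is a convex combination of the strictly positive numbers $|v|$ and $1$ (with at least one coefficient positive at each $t$), this is immediate. This establishes the required homotopy and completes the lemma.
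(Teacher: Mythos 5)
Your proof is correct: the computation of $\gamma\circ\iota$ is immediate, and your explicit homotopy $H_t$ is continuous, has the right endpoints, and stays in $\Conf_2(\mathbb{R}^n)$ because $\lambda_t=(1-t)|v|+t>0$. The paper states this lemma as a standard fact and gives no proof, so there is nothing to compare against; your argument is the usual deformation retraction of $\Conf_2(\mathbb{R}^n)$ onto the antipodal sphere, written out in full, and it is complete.
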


We now introduce a combinatorial notion for organizing constructions of more elaborate (co)homology classes.

\begin{definition}
Fix a finite ground set $S$. A \emph{partial binary cover} of $S$ is a finite collection $\lambda$ of subsets of $S$ of cardinality $2$. We say that $\lambda$ is a \emph{binary cover} if the union of its members is $S$, a \emph{partial binary partition} if its members are disjoint, and a \emph{binary partition} if both of these conditions hold. 
\end{definition}

Given a partial binary cover $\lambda$ of $\{1,\ldots, k\}$, we obtain the cohomology class \[\alpha_\lambda=\prod_{\{i,j\}\in \lambda}\alpha_{ij}\in H^{|\lambda|(n-1)}(\Conf_k(\mathbb{R}^n)).\] Note that the product is well-defined since we work over $\mathbb{F}_2$. If $\lambda$ is a partial binary partition, then we also have the homology class $\tau_\lambda$ given by the image of the fundamental class of the torus
\[\varphi_\lambda:(S^{n-1})^\lambda \xrightarrow{\iota^\lambda} \Conf_2(\mathbb{R}^n)^\lambda\subseteq \Conf_2(\mathbb{R}^n)^\lambda\times\Conf_1(\mathbb{R}^n)^{k-2|\lambda|}\to \Conf_k(\mathbb{R}^n).\] Here, the inclusion is induced by the inclusion of any point in the second factor, and the second map is induced by any choice of $|\lambda|+k-2|\lambda|$ orientation preserving self-embeddings of $\mathbb{R}^n$ with pairwise disjoint images. The space of such embeddings is connected, so $\varphi_\lambda$ is well-defined up to homotopy and $\tau_\lambda$ therefore well-defined.

\begin{lemma}\label{lem:projection or null}
Let $\lambda$ be a partial binary partition. 
\begin{enumerate}
\item If $\{i,j\}\in \lambda$, then $\gamma_{ij}\circ \varphi_\lambda$ is homotopic to projection onto the corresponding factor of $(S^{n-1})^\lambda$.  
\item If $\{i,j\}\notin \lambda$, then $\gamma_{ij}\circ \varphi_\lambda$ is nullhomotopic.
\end{enumerate}
\end{lemma}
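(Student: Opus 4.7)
I would analyze the composite $\gamma_{ij}\circ\varphi_\lambda:(S^{n-1})^\lambda\to S^{n-1}$ directly, by writing $\varphi_\lambda(y)=(x_1,\ldots,x_k)$ and tracking how $x_i$ and $x_j$ depend on the input coordinates $y=(y_{\{p,q\}})_{\{p,q\}\in\lambda}$. By construction, $x_i$ is either $e_{\{i,i'\}}(\pm y_{\{i,i'\}})$, when $i$ lies in a pair $\{i,i'\}\in\lambda$, or a fixed point in the image of a singleton embedding $e_i$, when $i$ is uncovered; the analogous description holds for $x_j$, and in all cases the embeddings involved have pairwise disjoint images.

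For part (1), assume $\{i,j\}\in\lambda$ and set $e:=e_{\{i,j\}}$, so that $x_i=e(-y_{\{i,j\}})$ and $x_j=e(y_{\{i,j\}})$. Then $\gamma_{ij}\circ\varphi_\lambda$ factors through the projection $(S^{n-1})^\lambda\to S^{n-1}$ onto the $\{i,j\}$-coordinate, followed by the map $y\mapsto(e(y)-e(-y))/|e(y)-e(-y)|$. I would contract $e$ to the identity through orientation-preserving self-embeddings of $\mathbb{R}^n$, reducing this latter map to $\gamma\circ\iota$, which is the identity on $S^{n-1}$ by Lemma \ref{lem:two points}.

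For part (2), assume $\{i,j\}\notin\lambda$, so that $x_i$ and $x_j$ lie in the images of two distinct embeddings $e_\alpha,e_\beta$ with disjoint images. Since $\varphi_\lambda$ is well-defined only up to homotopy, I am free to replace the embedding tuple by any other. Fixing pairwise distinct centers $c_\alpha\in\mathbb{R}^n$, I would choose, for each small $\epsilon>0$, an embedding $e_\alpha$ factoring through a diffeomorphism $\mathbb{R}^n\to B(c_\alpha,\epsilon)$ (for instance the rescaled version of $x\mapsto c_\alpha+x/(1+|x|)$). For every $y$, the unit vector from $x_i$ to $x_j$ then lies within angular distance $O(\epsilon/|c_\beta-c_\alpha|)$ of $(c_\beta-c_\alpha)/|c_\beta-c_\alpha|$; once $\epsilon$ is small enough, the image of $\gamma_{ij}\circ\varphi_\lambda$ is confined to a proper spherical cap, hence to a contractible open subset of $S^{n-1}$, and the composite is nullhomotopic.

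The main delicacy is justifying the swap of embedding tuples in part (2): it relies on the fact that the space of admissible tuples (orientation-preserving, pairwise disjoint images) is path connected, so that any two choices yield homotopic $\varphi_\lambda$. This is precisely the connectedness statement already invoked in the definition of $\varphi_\lambda$, and it allows the analysis above to be carried out with the most convenient choice of embeddings rather than arbitrary ones.
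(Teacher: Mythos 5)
Your proof is correct and follows essentially the same route as the paper: part (1) reduces to $\gamma\circ\iota=\mathrm{id}$ via the connectedness of the space of embeddings, and part (2) arranges the embeddings so that the Gauss map misses an open subset of $S^{n-1}$ (the paper uses a separating hyperplane where you use small balls around distinct centers, but the mechanism is identical). No gaps.
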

\begin{proof}
Supposing that $\{i,j\}\in \lambda$, we may choose the embeddings used in its definition so as to guarantee that $\varphi_\lambda$ fits into the commuting diagram \[\xymatrix{
(S^{n-1})^\lambda\ar[d]\ar[r]^{\varphi_\lambda}&\Conf_k(\mathbb{R}^n)\ar[d]^-\pi\ar[r]^-{\gamma_{ij}}&S^{n-1}\\
S^{n-1}\ar[r]^-\iota&\Conf_2(\mathbb{R}^n)\ar[r]^-{\gamma}&S^{n-1},\ar@{=}[u]
}\] where $\pi$ denotes the coordinate projection. The claim now follows from Lemma \ref{lem:two points}. 

Supposing that $\{i,j\}\notin \lambda$, we may choose the embeddings to guarantee the existence of a single hyperplane separating $x_i$ from $x_j$ for every $(x_1,\ldots, x_k)\in \im(\varphi_\lambda)$. It follows that $\gamma_{ij}\circ\varphi_\lambda$ is not surjective, implying the claim.
\end{proof}

We write $\delta$ for the Kronecker delta function and $\langle-,-\rangle$ for the Kronecker pairing of cohomology and homology.

\begin{lemma}\label{lem:kronecker}
Given a partial binary partition $\lambda$ and a partial binary cover $\mu$ of $\{1,\ldots, k\}$, \[\langle \alpha_\mu,\tau_\lambda\rangle=\delta(\lambda,\mu).\]
\end{lemma}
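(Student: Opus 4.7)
The plan is to reduce the pairing $\langle \alpha_\mu, \tau_\lambda\rangle$ to a computation on the torus $(S^{n-1})^\lambda$ via naturality, then apply Lemma \ref{lem:projection or null} factorwise.

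First I would invoke the definition $\tau_\lambda = \varphi_{\lambda,*}[(S^{n-1})^\lambda]$ and naturality of the Kronecker pairing to rewrite
\[
\langle \alpha_\mu,\tau_\lambda\rangle = \bigl\langle \varphi_\lambda^*\alpha_\mu,\,[(S^{n-1})^\lambda]\bigr\rangle = \Bigl\langle \prod_{\{i,j\}\in\mu}(\gamma_{ij}\circ\varphi_\lambda)^*[S^{n-1}],\,[(S^{n-1})^\lambda]\Bigr\rangle,
\]
where the equality of cup products follows from the definition $\alpha_{ij} = \gamma_{ij}^*[S^{n-1}]$ and multiplicativity of pullback.

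Next I would do a case split on whether $\mu\subseteq\lambda$. If some pair $\{i,j\}\in \mu$ is not a member of $\lambda$, then by part (2) of Lemma \ref{lem:projection or null} the map $\gamma_{ij}\circ\varphi_\lambda$ is nullhomotopic, so the corresponding factor of the cup product vanishes and the pairing is $0$; in particular this is what we want when $\lambda\neq\mu$ and $\mu\not\subseteq\lambda$. If instead $\mu\subseteq\lambda$, then for each $\{i,j\}\in\mu$ part (1) of Lemma \ref{lem:projection or null} identifies $(\gamma_{ij}\circ\varphi_\lambda)^*[S^{n-1}]$ with the pullback of $[S^{n-1}]$ along the projection of $(S^{n-1})^\lambda$ onto its $\{i,j\}$-factor, i.e.\ with the generator of the $\{i,j\}$-factor in the K\"unneth decomposition of $H^*((S^{n-1})^\lambda)$.

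Finally I would observe that under the K\"unneth isomorphism, the cup product $\prod_{\{i,j\}\in\mu}$ of these generators is the generator of the $\mu$-factors only, which pairs nontrivially with the fundamental class $[(S^{n-1})^\lambda]$ if and only if it is top-dimensional, i.e.\ if and only if $\mu = \lambda$. Combining the two cases gives $\langle \alpha_\mu,\tau_\lambda\rangle = \delta(\lambda,\mu)$. The only mild subtlety I anticipate is bookkeeping: making sure that the ``if and only if'' in the last step really uses that $\mu$ is a set (so that $|\mu|=|\lambda|$ together with $\mu\subseteq\lambda$ forces $\mu=\lambda$), and that the remaining untouched factors of $(S^{n-1})^\lambda$ contribute the class $1$ rather than something of intermediate degree.
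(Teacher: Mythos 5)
Your proof is correct and follows essentially the same route as the paper: both arguments push the computation onto the torus $(S^{n-1})^\lambda$ via $\varphi_\lambda$ and then apply Lemma \ref{lem:projection or null} to each factor, the only cosmetic difference being that the paper packages the maps $\gamma_{ij}$ into a single map of tori while you pull back the classes $\alpha_{ij}$ one at a time. Your closing remark about the degree bookkeeping (that $\mu\subseteq\lambda$ with matching top degree forces $\mu=\lambda$) is exactly the point the paper's phrase ``subtorus of positive codimension'' is handling.
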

\begin{proof}
Note that $\alpha_\lambda$ is obtained from the fundamental class of $(S^{n-1})^\lambda$ by pullback along the second map in the composite \[(S^{n-1})^\lambda\xrightarrow{\varphi_\lambda} \Conf_k(\mathbb{R}^n)\xrightarrow{(\gamma_{ij})_{\{i,j\}\in\lambda}} (S^{n-1})^\lambda.\] If $\lambda=\mu$, then Lemma \ref{lem:projection or null} shows that this composite is homotopic to the identity. If $\lambda\neq \mu$, then the same lemma shows that this composite factors up to homotopy through a subtorus of positive codimension. In either case, the claim follows. 
\end{proof}

\section{Orthogonality and zero-divisors}

In this section, we construct some nonvanishing products of zero-divisors. We largely follow \cite{Aguilar-GuzmanGonzalezHoekstra-Mendoza:FSMTMHTCOCST}, although our notation differs.

\begin{definition}
We say that partial binary covers $\lambda_1$ and $\lambda_2$ are \emph{orthogonal} if $\lambda_1\cap\lambda_2=\varnothing$.
\end{definition}

Given a partial binary partition $\lambda$ of $\{1,\ldots, k\}$, and fixing $r$ and $1\leq a<b\leq r$, we have the following $r$-fold zero-divisor (see Example \ref{example:bar notation}):

\[\zeta^{ab}_\lambda=\prod_{\{i,j\}\in \lambda}\zeta^{ab}(\alpha_{ij})\in H^*(\Conf_{k}(\mathbb{R}^n))^{\otimes r}.\] Alternatively, we have the formula

\[\zeta_{\lambda}^{ab}=\sum_{\mu\subseteq\lambda}1\otimes\cdots\otimes \alpha_\mu\otimes\cdots\otimes\alpha_{\mu^c}\otimes\cdots\otimes 1,\] with $\alpha_\mu$ and $\alpha_{\mu^c}$ appearing in the $a$th and $b$th factor, respectively.

\begin{lemma}\label{lem:orthogonal}
Suppose given partial binary partitions $\lambda_j$ for $1\leq j\leq r$. If $\lambda_1 \perp \lambda_2$, then \[\left\langle\zeta_{\lambda_1}^{12}\zeta_{\lambda_2}^{12}\prod_{j=3}^r\zeta_{\lambda_j}^{(j-1)j},\, \bigotimes_{j=1}^r \tau_{\lambda_j}\right\rangle=1.\]
\end{lemma}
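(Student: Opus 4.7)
The plan is to expand the product of zero-divisors into a sum of tensor products of $\alpha$-classes, and then to show that only one summand survives pairing against $\bigotimes_{j=1}^r\tau_{\lambda_j}$.

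First, using the explicit formula $\zeta_\lambda^{ab}=\sum_{\mu\subseteq\lambda}1\otimes\cdots\otimes\alpha_\mu\otimes\cdots\otimes\alpha_{\mu^c}\otimes\cdots\otimes 1$, expand
\[\zeta_{\lambda_1}^{12}\zeta_{\lambda_2}^{12}\prod_{j=3}^r\zeta_{\lambda_j}^{(j-1)j}=\sum_{(\mu_j)}\beta_1\otimes\beta_2\otimes\cdots\otimes\beta_r,\]
where $(\mu_j)$ ranges over tuples with $\mu_j\subseteq\lambda_j$ (writing $\mu_j^c=\lambda_j\setminus\mu_j$) and
\[\beta_1=\alpha_{\mu_1}\alpha_{\mu_2},\quad \beta_2=\alpha_{\mu_1^c}\alpha_{\mu_2^c}\alpha_{\mu_3},\quad \beta_i=\alpha_{\mu_i^c}\alpha_{\mu_{i+1}}\ (3\leq i\leq r-1),\quad \beta_r=\alpha_{\mu_r^c}.\]

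Second, I would record the key combinatorial fact: for any collections $\nu,\nu'$ of $2$-subsets, $\alpha_\nu\cdot \alpha_{\nu'}=\alpha_{\nu\cup\nu'}$ if $\nu\cap\nu'=\varnothing$ (as collections), and vanishes otherwise since $\alpha_{ij}^2$ pulls back from $H^{2(n-1)}(S^{n-1})=0$. Because each $\lambda_j$ is a partial binary partition, $\mu_j\cap\mu_j^c=\varnothing$; because $\lambda_1\cap\lambda_2=\varnothing$, any sub-collection of $\lambda_1$ is disjoint from any sub-collection of $\lambda_2$. So the products $\beta_i$ may be rewritten as $\alpha$-classes of unions, and in particular each $\beta_i$ is the $\alpha$-class of a partial binary cover.

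Third, the pairing factors across tensor components, and I would apply Lemma~\ref{lem:kronecker} to each factor, working from the right. The equation $\langle\alpha_{\mu_r^c},\tau_{\lambda_r}\rangle=\delta(\mu_r^c,\lambda_r)$ forces $\mu_r=\varnothing$; substituting and iterating gives $\beta_i=\alpha_{\mu_i^c}$ with $\mu_i=\varnothing$ for each $i=r,r-1,\ldots,3$. For factor $2$, one is left with $\beta_2=\alpha_{\mu_1^c}\cdot\alpha_{\mu_2^c}=\alpha_{\mu_1^c\cup\mu_2^c}$, and the pairing $\langle\beta_2,\tau_{\lambda_2}\rangle=1$ demands $\mu_1^c\cup\mu_2^c=\lambda_2$; but $\mu_1^c\subseteq\lambda_1$ is disjoint from $\lambda_2$ by orthogonality, forcing $\mu_1^c=\varnothing$ and $\mu_2^c=\lambda_2$, i.e.\ $\mu_1=\lambda_1$ and $\mu_2=\varnothing$.

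Finally, I would verify that the unique surviving tuple $(\lambda_1,\varnothing,\varnothing,\ldots,\varnothing)$ really contributes $1$: on factor $1$ it gives $\beta_1=\alpha_{\lambda_1}$, pairing with $\tau_{\lambda_1}$ to $1$, and on all subsequent factors the computations above show the pairing is $1$. The main obstacle is purely bookkeeping---carefully tracking which $\mu_j$ appear in which tensor slot and invoking orthogonality at the one place it is needed, namely to decouple $\mu_1^c$ from $\mu_2^c$ in factor $2$; once this is organized, the vanishing of almost every term is forced by Lemma~\ref{lem:kronecker} in cascade.
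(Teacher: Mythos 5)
Your proof is correct and follows essentially the same route as the paper: expand the product of zero-divisors via the formula for $\zeta_\lambda^{ab}$, use $\alpha_{ij}^2=0$ and Lemma~\ref{lem:kronecker} to kill all but one term, with orthogonality entering exactly where the first two tensor factors interact. The only cosmetic difference is that you run a single right-to-left cascade forcing $\mu_j=\varnothing$ for $j\geq 3$, whereas the paper isolates the $r=2$ case first and then reduces to it by a degree argument---the content is identical.
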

\begin{proof}
We first consider the case $r=2$, which is the claim $\left\langle\zeta_{\lambda_1}\zeta_{\lambda_2},\tau_{\lambda_1}\otimes\tau_{\lambda_2}\right\rangle=1$.
Since $\alpha_{ij}^2=0$, this inner product is a sum of terms of the form $\langle\alpha_{\mu_1}\otimes\alpha_{\mu_2},\tau_{\lambda_1}\otimes\tau_{\lambda_2}\rangle$, where $\mu_1$ and $\mu_2$ are partial binary covers. By Lemma \ref{lem:kronecker}, such a term is equal to $\delta(\lambda_1,\mu_1)\delta(\lambda_2,\mu_2)$. Since $\lambda_1\perp\lambda_2$, there is precisely one term with $\lambda_1=\mu_1$ and $\lambda_2=\mu_2$, and the claim follows. 

In the general case, we note that all but the first two factors of the product class in question coincide with those of \begin{align*}
\prod_{j=3}^r\zeta_{\lambda_j}^{(j-1)j}
&=\sum_{\mu_j\subseteq\lambda_j,\,3\leq j\leq r}
1\otimes\alpha_{\mu_3}\otimes\alpha_{\mu_3^c}\alpha_{\mu_4}\otimes\cdots\otimes \alpha_{\mu_{r-1}^c}\alpha_{\mu_{r}}\otimes\alpha_{\mu_{r}^c}.
\end{align*} For degree reasons, the only terms in this expression not annihilated by evaluation on $1\otimes 1\otimes \tau_{\lambda_3}\otimes\cdots\otimes \tau_{\lambda_r}$ are those with $\mu_{j}=\varnothing$ for every $j$, whence \begin{align*}\left\langle \zeta_{\lambda_1}^{12}\zeta_{\lambda_2}^{12}\prod_{j=3}^r\zeta_{\lambda_j}^{(j-1)j},\,\bigotimes_{j=1}^r \tau_{\lambda_j}\right\rangle&=\left\langle\zeta_{\lambda_1}\zeta_{\lambda_2},\tau_{\lambda_1}\otimes\tau_{\lambda_2}\right\rangle\cdot \prod_{j=3}^r\left\langle \alpha_{\lambda_j},\tau_{\lambda_j}\right\rangle\\
&=\left\langle\zeta_{\lambda_1}\zeta_{\lambda_2},\tau_{\lambda_1}\otimes\tau_{\lambda_2}\right\rangle\\
&=1
\end{align*} by Lemma \ref{lem:kronecker} and the previous case.
\end{proof}

We close this section by recording the following simple observation.

\begin{lemma}\label{lem:orthogonal exist}
For any $d>1$, there exists an orthogonal pair of binary partitions of $\{1,\ldots, 2d\}$
\end{lemma}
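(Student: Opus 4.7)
The statement reduces to a purely combinatorial question: exhibit two edge-disjoint perfect matchings on a $2d$-element ground set. The plan is to construct them explicitly as the two perfect matchings of the even cycle on the vertex set $\{1,\ldots,2d\}$. Specifically, I would set
\[\lambda_1 = \bigl\{\{2i-1,2i\}:1\leq i\leq d\bigr\} \quad\text{and}\quad \lambda_2 = \bigl\{\{2i,2i+1\}:1\leq i\leq d-1\bigr\}\cup\bigl\{\{2d,1\}\bigr\}.\]

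First I would verify that each $\lambda_j$ is a binary partition: the members are manifestly $2$-element subsets of $\{1,\ldots,2d\}$, and a direct inspection shows that each integer in the ground set appears in exactly one pair of each collection. Next I would verify orthogonality $\lambda_1\cap\lambda_2=\varnothing$: every pair in $\lambda_1$ consists of an odd integer together with its immediate successor, while every pair in $\lambda_2$ other than $\{2d,1\}$ consists of an even integer together with its immediate successor, so no such pair can coincide with a pair in $\lambda_1$. The remaining pair $\{2d,1\}$ coincides with $\{1,2\}\in\lambda_1$ precisely when $2d=2$, a case excluded by the hypothesis $d>1$.

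There is essentially no obstacle here — the lemma is a combinatorial warm-up ensuring that the orthogonality hypothesis of Lemma \ref{lem:orthogonal} can actually be satisfied on sufficiently large ground sets, and the assumption $d>1$ enters in exactly one place, namely to distinguish the pair $\{2d,1\}$ from $\{1,2\}$.
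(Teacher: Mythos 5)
Your construction is exactly the one in the paper: the two perfect matchings $\{\{2i-1,2i\}\}$ and $\{\{2i,2i+1\}\}\cup\{\{2d,1\}\}$ coming from the even cycle, with $d>1$ used only to keep $\{2d,1\}$ distinct from $\{1,2\}$. The verification is correct and nothing further is needed.
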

\begin{proof}
The sets $\left\{\{1,2\},\{3,4\},\ldots, \{2d-1,2d\}\right\}$ and $\left\{\{2,3\},\{4,5\},\ldots, \{2d,1\}\right\}$ are both binary partitions, which are orthogonal for $d>1$ (the two are equal for $d=1$).
\end{proof}

\section{Stars and circles}\label{section:stars and circles}

We write $\stargraph{3}$ for the \emph{star graph} with three edges, which is the cone on the discrete space $\{1,2,3\}$ with its canonical cell structure. It will be convenient to work with the coordinates \[\stargraph{3}=\left\{(t_1, t_2, t_3)\in [0,1]^3\mid \#\{i: t_i\neq 0\}\leq 1\right\}.\] 

We have the following standard result concerning the configuration space of two points in this graph---see \cite{Abrams:CSBGG}, for example. Write $\epsilon$ for the sixfold concatenated path \[\epsilon=(e_1, \bar e_2)\star (\bar e_1, e_3)\star (e_2, \bar e_3) \star (\bar e_2, e_1)\star (e_3, \bar e_1)\star (\bar e_3, e_2),\] where $e_i:[0,1]\to \stargraph{3}$ is the unique path with $t_i=e_i(t)$, and $\bar e_i$ is its reverse.

\begin{lemma}\label{lem:star is circle}
The subspace $\im (\epsilon)\subseteq \Conf_2(\stargraph{3})$ is a topological circle and a deformation retract.
\end{lemma}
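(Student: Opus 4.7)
The plan is to establish the two assertions separately: first that the image $\im(\epsilon)$ is a topological circle, and second that it is a strong deformation retract of $\Conf_2(\stargraph{3})$.

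For the first assertion, I would regard $\epsilon$ as the concatenation of its six constituent arcs $(e_i, \bar e_j)$ with $i \neq j$. On the open parameter interval of such an arc, particle $1$ traces the $i$-th open edge at coordinate $t \in (0, 1)$ while particle $2$ traces the $j$-th open edge at coordinate $1 - t$; this yields injectivity of each arc and pairwise disjointness of the open interiors of distinct arcs, the latter because the ordered pair of edges occupied by the particles distinguishes each arc. The endpoints of the arcs all lie among the six extremal configurations in which one particle sits at the central vertex $(0,0,0)$ and the other at an outer vertex. Direct inspection of the concatenation shows that each extremal configuration appears as the terminus of exactly one arc and the origin of exactly one other, so the six arcs and six extremal configurations form a cyclic 1-complex, homeomorphic to $S^1$.

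For the second assertion, I would construct an explicit strong deformation retraction $r_u : \Conf_2(\stargraph{3}) \to \Conf_2(\stargraph{3})$ onto $\im(\epsilon)$. Stratify $\Conf_2(\stargraph{3})$ by the pair of closed edges containing the two particles, and define $r_u$ piecewise. On a ``disjoint-edge'' stratum $R_{ij}$ ($i \neq j$), where particle $1$ has coordinate $\alpha$ on the $i$-th edge and particle $2$ has coordinate $\beta$ on the $j$-th edge (necessarily with $\alpha + \beta > 0$), I would use the straight-line radial retraction
\[ r_u(\alpha, \beta) = \bigl( (1-u)\alpha + u\tfrac{\alpha}{\alpha + \beta},\ (1-u)\beta + u\tfrac{\beta}{\alpha + \beta} \bigr), \]
which terminates on the hexagonal arc $\{\alpha + \beta = 1\}$, the image of the corresponding piece of $\epsilon$. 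On a ``same-edge'' stratum $S_i$, where both particles lie on the $i$-th closed edge at coordinates $a \neq b$, linearly push $(a, b)$ toward $(0, 1)$ if $a < b$ and toward $(1, 0)$ if $a > b$; both targets are extremal configurations on $\im(\epsilon)$. A routine check confirms that $r_u$ restricts to the identity on $\im(\epsilon)$ for every $u \in [0, 1]$ and that $r_1$ takes values in $\im(\epsilon)$.

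The main obstacle will be continuity of $r_u$ across the stratum interfaces, which occur precisely at configurations where at least one particle lies at the central vertex or at an outer vertex. I expect a short case analysis to suffice: for example, a configuration with particle $1$ at the central vertex and particle $2$ in the $j$-th open edge at coordinate $\beta$ may be viewed as a point of $R_{kj}$ for any $k \neq j$ (assigning particle $1$ the coordinate $0$ on the $k$-th edge) or of $S_j$ (coordinate $0$ on the $j$-th edge), and in each case the applicable formula specializes to the same trajectory, fixing particle $1$ at the central vertex while moving particle $2$ linearly along the $j$-th edge to the outer vertex. Analogous computations at the remaining interfaces (configurations with a particle at an outer vertex, or with one particle crossing through the central vertex) complete the verification and yield the desired strong deformation retraction.
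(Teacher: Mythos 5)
Your proof is correct. Note, though, that the paper does not prove this lemma at all: it is stated as a standard fact with a citation to Abrams, whose approach goes through a general deformation retraction of $\Conf_2$ onto a discretized (cubical) configuration space attached to the cell structure of the graph. Your argument is instead a direct, self-contained construction, and it holds up under scrutiny. The identification of $\im(\epsilon)$ with a hexagonal circle is right: the six arcs are injective, their open interiors are separated by the ordered pair of open edges occupied by the particles, and the six extremal configurations are matched up cyclically. For the retraction, the key points all check: on $R_{ij}$ your formula is the radial scaling $(\alpha,\beta)\mapsto c_u(\alpha,\beta)$ with $c_u=(1-u)+u/(\alpha+\beta)>0$, so it never hits the excluded origin, stays in $[0,1]^2$ by convexity, fixes the antidiagonal $\{\alpha+\beta=1\}$ (which is exactly the arc of $\epsilon$ labelled by $(i,j)$), and lands on it at $u=1$; on $S_i$ the two open triangles $\{a<b\}$ and $\{a>b\}$ are clopen in $S_i$, the linear push preserves the sign of $b-a$, and it fixes the two extremal configurations $(0,1)$ and $(1,0)$, which are precisely $\im(\epsilon)\cap S_i$. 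Since every overlap of two strata forces one particle to sit at the central vertex, your single representative interface computation really does cover all cases up to symmetry, and the pasting lemma (the strata being finitely many closed sets) gives continuity jointly in $u$. What your route buys is an explicit strong deformation retraction with no appeal to Abrams' discretization theorem; what it costs is that it is special to $\stargraph{3}$ with two particles, whereas the cited machinery applies uniformly to all graph configuration spaces.
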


The parametrization $\epsilon$ supplies an orientation, which is tied to the canonical cyclic ordering of the edges of $\stargraph{3}$. Via the standard orientation of $\mathbb{R}^2$, any piecewise smooth embedding $\varphi:\stargraph{3}\to \mathbb{R}^2$ induces a second cyclic ordering on this set.

\begin{definition}
Let $\varphi:\stargraph{3}\to \mathbb{R}^2$ be a piecewise smooth embedding. 
\begin{enumerate}
\item We say that $\varphi$ is \emph{orientation preserving} if it induces the canonical cyclic ordering on the edges of $\stargraph{3}$. 
\item We say that an orientation preserving, piecewise linear embedding $\varphi$ is \emph{standard} if the vectors $\varphi(1,0,0)$, $\varphi(0,1,0)$, and $\varphi(0,0,1)$ are pairwise linearly independent.
\end{enumerate}
\end{definition}

We now state the main result of this section.

\begin{proposition}\label{prop:star is euclidean}
For any orientation preserving, piecewise smooth embedding $\varphi:\stargraph{3}\to \mathbb{R}^2$, the composite map \[S^1\xrightarrow{\epsilon} \Conf_2(\stargraph{3})\xrightarrow{\varphi}\Conf_2(\mathbb{R}^2)\xrightarrow{\gamma} S^1\] has degree $1$.
\end{proposition}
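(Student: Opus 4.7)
The plan is to reduce the claim to an explicit computation for a single symmetric embedding, exploiting the fact that the degree in question is a homotopy invariant. Since $\gamma$ is a homotopy equivalence by Lemma \ref{lem:two points}, the degree of $\gamma \circ \varphi \circ \epsilon$ depends only on the free homotopy class of $\varphi \circ \epsilon$ in $\Conf_2(\mathbb{R}^2)$, hence only on the path component of $\varphi$ in the space of orientation-preserving piecewise smooth embeddings $\stargraph{3} \hookrightarrow \mathbb{R}^2$. I would argue that any such $\varphi$ may be connected through embeddings to the standard symmetric embedding $\varphi_0$ with $\varphi_0(0,0,0) = 0$ and $v_i := \varphi_0(\text{endpoint of edge }i) = (\cos(2\pi i/3), \sin(2\pi i/3))$: first straighten each edge of the image to the line segment between its endpoints, then continuously slide the endpoints into symmetric position while maintaining the canonical cyclic order. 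It therefore suffices to verify the claim for $\varphi_0$.

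For the model embedding $\varphi_0$, each of the six concatenated paths in $\epsilon$ consists of one configuration point sliding linearly from the center to some $v_j$ (or back) while the other simultaneously slides linearly between the center and a different $v_k$. During each segment the Gauss vector $x_2(t) - x_1(t)$ therefore traces a straight-line path in $\mathbb{R}^2$, and this path avoids the origin because $v_j$ and $v_k$ are linearly independent. At the consecutive segment endpoints the Gauss vector takes the values $v_2$, $-v_1$, $v_3$, $-v_2$, $v_1$, $-v_3$, returning to $v_2$. In the symmetric case these are unit vectors whose arguments advance by exactly $\pi/3$ counterclockwise at each step, so the straight-line interpolation over each segment traverses the short arc of length $\pi/3$ on $S^1$. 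Summing over the six segments gives total winding $2\pi$, yielding degree $+1$.

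The main obstacle is the path-connectedness assertion for the embedding space. The straightening step requires some care to stay within the locus of embeddings; I envision straightening one edge at a time inside a suitable regular neighborhood, using that the edges of $\stargraph{3}$ meet only at the central vertex. A secondary concern is that the winding come out to $+2\pi$ rather than $-2\pi$, for which the orientation-preserving hypothesis is essential: it ties the orientation of $\epsilon$ (dictated by the canonical cyclic order of the edges of $\stargraph{3}$) to the standard orientation of $\mathbb{R}^2$.
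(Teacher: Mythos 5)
Your overall strategy---deform $\varphi$ to a model embedding, invoke homotopy invariance of the degree, and verify the model case explicitly---is the same as the paper's, and your verification of the model case is correct: along the six segments of $\epsilon$ the (unnormalized) Gauss vector moves linearly through the values $v_2,-v_1,v_3,-v_2,v_1,-v_3,v_2$, each segment avoids the origin because consecutive values are linearly independent, and in the symmetric model each step advances the argument by exactly $\pi/3$, for total winding $2\pi$ and degree $+1$. (The paper instead observes that for a ``standard'' piecewise linear embedding the composite $\gamma\circ\varphi\circ\epsilon$ is a continuous bijection $S^1\to S^1$, hence a homeomorphism, and checks that it preserves orientation; your explicit count is, if anything, more transparent.)

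The difference, and the soft spot in your write-up, is the reduction step. As literally stated, ``straighten each edge of the image to the line segment between its endpoints'' can leave the locus of embeddings: an edge of the image may wind around the endpoint of another edge, so the chord---and a fortiori the linear homotopy onto it---may cross that other edge. You flag this yourself, but ``straightening one edge at a time inside a suitable regular neighborhood'' is not yet an argument, and path-connectedness of the space of orientation preserving embeddings $\stargraph{3}\hookrightarrow\mathbb{R}^2$ is exactly the kind of claim that requires proof rather than assertion. The paper sidesteps the global question by localizing: since $\Conf_2(\stargraph{3})$ deformation retracts onto $\im(\epsilon)$ and $\epsilon$ is freely homotopic to the corresponding loop supported in an arbitrarily small sub-star around the essential vertex, the degree depends only on the germ of $\varphi$ at that vertex, and this germ is isotopic to the piecewise linear embedding given by its one-sided derivatives, which becomes standard after a further small isotopy. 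Replacing your global straightening by this local linearization makes the reduction routine, and the rest of your argument then goes through.
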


\begin{lemma}\label{lem:piecewise smooth diagram}
Given an orientation preserving, piecewise smooth embedding $\varphi:\stargraph{3}\to \mathbb{R}^2$, there is a diagram \[\xymatrix{
\stargraph{3}\ar[d]\ar[r]&\mathbb{R}^2\ar[d]\\
\stargraph{3}\ar[r]^-\varphi&\mathbb{R}^2
}\] commuting up to piecewise smooth isotopy fixing the $0$-skeleton of $\stargraph{3}$, in which the righthand map is an orientation preserving embedding, the lefthand map the embedding of a piecewise linear isotopy retract, and the top map a standard embedding. 
\end{lemma}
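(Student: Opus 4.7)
The plan is to work locally near the central vertex of $\stargraph{3}$ and use planar topology to straighten $\varphi$ to a standard embedding on a small sub-star. Write $v_0$ for the central vertex and $p=\varphi(v_0)\in\mathbb{R}^2$. First I would choose a small closed disk $D$ centered at $p$, and then $\epsilon>0$ so small that the sub-star $\stargraph{3}^\epsilon\subseteq \stargraph{3}$ obtained by truncating each edge to parameter range $[0,\epsilon]$ maps via $\varphi$ into $D$, sending the three leaves of $\stargraph{3}^\epsilon$ to three distinct points $q_1,q_2,q_3\in\partial D$ and the three edges to piecewise smooth arcs from $p$ to the $q_i$, pairwise disjoint except at $p$. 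The inclusion $\stargraph{3}^\epsilon\hookrightarrow \stargraph{3}$ is a piecewise linear isotopy retract (via radial contraction of the outer edge segments), and will serve as the left-hand map in the diagram.

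Next, because $\varphi$ is orientation preserving, the cyclic order of $q_1,q_2,q_3$ along $\partial D$ matches the canonical cyclic order on the edges of $\stargraph{3}$. By perturbing the truncation parameters slightly (allowing distinct $\epsilon_i$ for each edge), I would arrange that the three vectors $q_i-p$ are pairwise linearly independent; this is an open and dense condition, and it remains true under further small shrinking. Define $\psi:\stargraph{3}^\epsilon\to\mathbb{R}^2$ to be the piecewise linear embedding sending $v_0\mapsto p$ and the $i$th leaf to $q_i$ by a straight segment. By construction $\psi$ is standard in the sense of the lemma, and it will serve as the top map.

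Finally, take the right-hand map to be the identity of $\mathbb{R}^2$. The required commutativity reduces to showing that $\varphi|_{\stargraph{3}^\epsilon}$ and $\psi$ are piecewise smoothly isotopic as maps $\stargraph{3}^\epsilon\to\mathbb{R}^2$ rel the $0$-skeleton. Both the three $\varphi$-arcs and the three straight $\psi$-segments lie in $D$, have the same endpoints $p$ and $q_i$, and subdivide $D$ into three closed sectors in the same canonical cyclic order. A piecewise smooth Schoenflies/Alexander argument, applied sector by sector, produces an ambient homeomorphism of $D$ fixing $\partial D\cup\{p\}$ pointwise and carrying each arc to the corresponding segment. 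Extending by the identity outside $D$ and using that $D$ is a disk yields an ambient isotopy of $\mathbb{R}^2$ from this homeomorphism to $\mathrm{id}_{\mathbb{R}^2}$ rel $\partial D\cup\{p,q_1,q_2,q_3\}$; tracking $\psi$ along this isotopy gives the required piecewise smooth isotopy of maps rel $0$-skeleton.

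The main obstacle is the last step: producing a piecewise smooth ambient isotopy of $D$ that carries the three $\varphi$-arcs to the three straight segments while fixing their endpoints. The combinatorial input (same endpoints, same cyclic order, matching sector decomposition) is exactly what makes a sector-by-sector application of the piecewise smooth Schoenflies theorem go through, but some care is needed to ensure that the resulting homeomorphism can be chosen piecewise smooth, rather than merely continuous, away from the four fixed points.
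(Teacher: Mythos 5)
Your proposal is essentially the paper's argument---localize near the essential vertex, replace $\varphi$ there by a piecewise linear embedding with the same vertex images, and absorb the difference into an isotopy rel the $0$-skeleton---with the straightening step fleshed out via a sector-by-sector Schoenflies/Alexander argument that the paper leaves implicit. The one claim that does not hold as stated is that pairwise linear independence of the vectors $q_i-p$ can be achieved by perturbing the truncation parameters $\epsilon_i$, this being ``open and dense'': if two edges of $\varphi(\stargraph{3})$ emanate from $p$ in exactly opposite directions along a common line, then $q_1-p$ and $q_2-p$ are linearly dependent for \emph{every} choice of truncation parameters, so the condition can be empty rather than dense. The fix is what the paper does: after straightening, apply one further small isotopy (supported near $p$, fixing $p$ and moving the leaf images) to make the piecewise linear embedding standard; since being standard is an open condition on the leaf images and the complement is a finite union of lines, an arbitrarily small such adjustment exists and does not disturb the rest of your construction.
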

\begin{proof}
In a sufficiently small geodesic neighborhood of the image of the essential vertex of $\stargraph{3}$, the map $\varphi$ is isotopic to the piecewise linear embedding given by the respective one-sided derivatives, which is orientation preserving since $\varphi$ was. If necessary, a second isotopy guarantees that this piecewise linear embedding is standard.
\end{proof}

\begin{proof}[Proof of Proposition \ref{prop:star is euclidean}] If $\varphi$ is standard, then the composite in question is bijective, hence a homeomorphism, and it is easily checked to be orientation preserving. In the general case, we appeal to Lemma \ref{lem:piecewise smooth diagram} to obtain the homotopy commutative middle square in the diagram
\[\xymatrix{
&\Conf_2(\stargraph{3})\ar[dd]\ar[r]&\Conf_2(\mathbb{R}^2)\ar[dd]\ar[dr]^-\gamma\\
S^1\ar[ur]^-\epsilon\ar[dr]^-\epsilon&&&S^1\\
&\Conf_2(\stargraph{3})\ar[r]^-\varphi&\Conf_2(\mathbb{R}^2)\ar[ur]^-\gamma. 
}\] Since each vertical map is homotopic to the appropriate identity map, the two triangles are also homotopy commutative, so the upper and lower total composites are homotopic. By the previous case, the upper total composite has degree $1$, so the claim follows by homotopy invariance of the degree.
\end{proof}

\section{A diagram}

Fixing a set $W$ of essential vertices of the planar graph $\graf$ of cardinality $d$, a binary partitition $\lambda$ of $\{1,\ldots, 2d\}$, and a bijection $W\cong \lambda$, we explain the construction of the commuting diagram

{
\[\xymatrix{
\Conf_2(\stargraph{3})^{W}\ar[dr]\ar[dd]_-{\prod_{v\in W}\psi_v}\ar[r]^-{(4)}&\Conf_2(\stargraph{3})^W\times\Conf_{k-2d}(\mathbb{R})\ar[r]^-{(3)}&\Conf_k(\graf)\ar[d]^-{}\\
\ar[d]&\prod_{v\in W}\Conf_2(U_v)\ar[dr]^-{(2)}&\Conf_{k}(\mathbb{R}^2)\ar[d]^-{\pi}\\
\Conf_2(\mathbb{R}^2)^{W}\ar[ur]_-{\cong}^-{(1)}\ar[rr]&&\Conf_{2d}(\mathbb{R}^2),
}\]} where $\pi$ denotes the coordinate projection.

We begin by choosing a piecewise smooth embedding $\graf\subseteq \mathbb{R}^2$. Next, for each $v\in W$, we choose a coordinate neighborhood $v\in U_v\subseteq \mathbb{R}^2$ with the property that $U_v\cap \graf$ is connected and contains no vertex of $\graf$ other than $v$. We further require the $U_v$ to be pairwise disjoint. The first numbered map is determined by choosing an orientation preserving diffeomorphism $\mathbb{R}^2\cong U_v$ for each $v\in W$. The second numbered map is determined by the inclusions $U_v\subseteq \mathbb{R}^2$ and the bijection $W\cong\lambda$.

Finally, for each $v\in W$, we choose a piecewise linear embedding $\stargraph{3}\to\graf$ with image lying in $U_v$ (in particular, the image of the essential vertex is $v$). We further require that the composite of each such embedding with $\iota$ be orientation preserving. Separately, we choose a smooth embedding of $\mathbb{R}$ into an edge of $\graf$ with image disjoint from each $U_v$. The third numbered map is determined by these embeddings and the bijection $W\cong\lambda$, and the fourth numbered map is determined by choosing any point in $\Conf_{k-2d}(\mathbb{R})$.

The remainder of the diagram is determined by commutativity. In particular, we obtain the maps $\psi_v$, concerning which we have the following immediate consequence of Proposition \ref{prop:star is euclidean}.

\begin{corollary}\label{cor:degree}
For each $v\in W$, the composite \[S^1\to \Conf_2(\stargraph{3})\xrightarrow{\psi_v} \Conf_2(\mathbb{R}^2)\xrightarrow{\gamma} S^1\] has degree $1$.
\end{corollary}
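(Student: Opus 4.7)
The corollary is really a repackaging of Proposition~\ref{prop:star is euclidean}, so my plan is simply to trace through the diagram and identify $\psi_v$ as the configuration-space functor applied to an orientation preserving, piecewise smooth embedding $\stargraph{3}\to\mathbb{R}^2$.

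First, I would unwind the definition of $\psi_v$ from the commuting diagram. The map labeled $(1)$ is induced by the chosen orientation preserving diffeomorphisms $\mathbb{R}^2\xrightarrow{\cong}U_v$, and the map labeled $(3)$ restricted to the $v$-factor is induced by the chosen piecewise linear embedding $\stargraph{3}\hookrightarrow \graf$ whose image lies in $U_v$. Commutativity forces $\psi_v$ to be induced by the composite embedding
\[
\varphi_v\colon\stargraph{3}\hookrightarrow U_v\xrightarrow{\cong}\mathbb{R}^2,
\]
where the second arrow is the inverse of the chosen diffeomorphism.

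Next, I would verify that $\varphi_v$ is orientation preserving in the sense of the previous section. By hypothesis, the composite of the chosen embedding $\stargraph{3}\hookrightarrow\graf$ with the piecewise smooth embedding $\graf\subseteq\mathbb{R}^2$ is orientation preserving, and both the inclusion $U_v\hookrightarrow\mathbb{R}^2$ and the chosen diffeomorphism $\mathbb{R}^2\xrightarrow{\cong}U_v$ preserve orientation of $\mathbb{R}^2$. Hence the two maps $U_v\to\mathbb{R}^2$ given by inclusion and by the inverse diffeomorphism induce the same cyclic ordering on the image of the essential vertex of $\stargraph{3}$, and the cyclic ordering agrees with the canonical one.

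Finally, I would apply Proposition~\ref{prop:star is euclidean} directly to $\varphi_v$: the map $\psi_v\circ\epsilon$ coincides with $\varphi_v\circ\epsilon$ by construction, so the composite $\gamma\circ\psi_v\circ\epsilon$ has degree $1$. I don't anticipate any real obstacle here; the only point requiring any care is the orientation-bookkeeping in the middle step, and that follows immediately from the stipulations made in setting up the diagram.
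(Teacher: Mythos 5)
Your proposal is correct and matches the paper's reasoning: the paper states the corollary as an immediate consequence of Proposition~\ref{prop:star is euclidean}, precisely because $\psi_v$ is by construction induced by the orientation preserving, piecewise smooth embedding $\stargraph{3}\hookrightarrow U_v\xrightarrow{\cong}\mathbb{R}^2$. Your unwinding of the diagram and the orientation bookkeeping simply make explicit what the paper leaves implicit.
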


\section{Planar graphs}

The goal of this section is to establish the following lower bound, from which Theorem \ref{thm:farber} will easily follow.

\begin{theorem}\label{thm:main}
Let $\graf$ be a connected planar graph. For any $r>0$ and $k\geq4$, we have the inequality \[\mathrm{TC}_r(\Conf_k(\graf))\geq r\min\left\{\left\lfloor\frac{k}{2}\right\rfloor, m(\graf)\right\}.\]
\end{theorem}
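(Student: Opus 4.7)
The plan is to combine the cohomological lower bound $\mathrm{zcl}_r \leq \mathrm{TC}_r$ of Theorem \ref{thm:tc inequalities} with the witness-factoring principle of Lemma \ref{lem:witness}. Setting $d := \min\{\lfloor k/2 \rfloor, m(\graf)\}$, I will construct a product of $rd$ $r$-fold zero-divisors in $H^*(\Conf_k(\graf)^r)$ whose nonvanishing is certified by a map built from the diagram of Section 6 combined with the planar embedding $\graf \subseteq \mathbb{R}^2$. I focus on the main case $d \geq 2$; the case $d=0$ is vacuous, while $d=1$ (forced by $m(\graf)=1$ since $k \geq 4$) reduces to showing $H^1(\Conf_k(\graf)) \neq 0$ via the same diagram, together with an auxiliary class to upgrade from $r-1$ to $r$ zero-divisors.

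First I choose a subset $W$ of $d$ essential vertices of $\graf$ and, invoking Lemma \ref{lem:orthogonal exist}, a pair of orthogonal binary partitions $\lambda_1 \perp \lambda_2$ of $\{1,\ldots,2d\}$. For $3 \leq j \leq r$ I set $\lambda_j := \lambda_1$, and for each $j$ I fix a bijection $\beta_j \colon W \cong \lambda_j$. Each bijection instantiates the construction of Section 6 to produce a map $\Phi_j \colon \Conf_2(\stargraph{3})^W \to \Conf_k(\graf)$, and assembling these yields the composite
\[
F \colon (S^1)^{W\times r} \xrightarrow{\epsilon^{W\times r}} \Conf_2(\stargraph{3})^{W\times r} \xrightarrow{\prod_j \Phi_j} \Conf_k(\graf)^r \xrightarrow{\varphi^r} \Conf_k(\mathbb{R}^2)^r.
\]
The goal is to exhibit $F$ as a witness for $\zeta := \zeta_{\lambda_1}^{12}\zeta_{\lambda_2}^{12}\prod_{j=3}^r\zeta_{\lambda_j}^{(j-1)j}$. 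Once this is established, Lemma \ref{lem:witness} applied to $\varphi^r$ gives $\mathrm{zcl}_r(\Conf_k(\graf)) \geq rd$, and Theorem \ref{thm:tc inequalities} concludes.

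I verify the witness property by computing $\langle F^*\zeta,\, [(S^1)^{W\times r}]\rangle = \langle \zeta,\, F_*[(S^1)^{W\times r}]\rangle$. Since $F = \prod_j F_j$ for $F_j := \varphi \circ \Phi_j \circ \epsilon^W \colon (S^1)^W \to \Conf_k(\mathbb{R}^2)$, the K\"unneth formula reduces the task to identifying each $F_{j,*}[(S^1)^W]$ with $\tau_{\lambda_j}$. The commutativity of the Section 6 diagram together with Corollary \ref{cor:degree} shows that $\gamma_{ik}\circ F_j$ is a degree-one self-map on the $v$-th circle factor whenever $\{i,k\}\in\lambda_j$ and $\beta_j(v)=\{i,k\}$; for $\{i,k\}\notin\lambda_j$ with $\{i,k\}\subseteq\{1,\ldots,2d\}$, the pairwise disjointness of the coordinate neighborhoods $U_v$ forces $\gamma_{ik}\circ F_j$ to be nullhomotopic by the separating-hyperplane argument of Lemma \ref{lem:projection or null}. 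These two facts together identify the pushforward as claimed, whence $F_*[(S^1)^{W\times r}] = \bigotimes_j \tau_{\lambda_j}$, and Lemma \ref{lem:orthogonal} supplies the final equality $\langle \zeta, \bigotimes_j \tau_{\lambda_j}\rangle = 1$.

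The principal obstacle will be the bookkeeping in this last step: confirming $F_{j,*}[(S^1)^W] = \tau_{\lambda_j}$ demands careful tracking of how $\beta_j$ mediates between the local degree-one maps supplied by Corollary \ref{cor:degree} and the antipodal embeddings $\iota$ used to define $\tau_{\lambda_j}$, along with verifying that the $k-2d$ auxiliary particles placed by the Section 6 diagram contribute trivially to every $\gamma_{ik}$ with $\{i,k\}\subseteq\{1,\ldots,2d\}$.
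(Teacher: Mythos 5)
Your proposal is correct and follows essentially the same route as the paper: reduce to $d=\min\{\lfloor k/2\rfloor,m(\graf)\}\geq 2$, use Lemma \ref{lem:orthogonal exist} to pick $\lambda_1\perp\lambda_2$, run the fundamental class of a torus of local star-circles through the Section 6 diagram and the planar embedding, and invoke Corollary \ref{cor:degree}, Lemma \ref{lem:orthogonal}, Lemma \ref{lem:witness}, and Theorem \ref{thm:tc inequalities}. The only (harmless) difference is that you verify the witness property by computing the composites $\gamma_{ik}\circ F_j$ coordinatewise, whereas the paper first upgrades Corollary \ref{cor:degree} to the homotopy $\psi_v\circ\epsilon\simeq\iota$, identifying the whole composite with $\prod_j\varphi_{\lambda_j}$ and letting Lemma \ref{lem:orthogonal} do the pairing in one step.
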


Given an integer $d$ such that $d\leq m(\graf)$ and $2d\leq k$, together with (not necessarily distinct) binary partitions $\lambda_j$ of $\{1,\ldots, 2d\}$ for $1\leq j\leq r$, the construction of the previous section provides the diagram

\[\xymatrix{
&&\displaystyle\prod_{j=1}^r\Conf_2(\stargraph{3})^{\lambda_j}\ar[d]\ar[rr]&&\Conf_k(\graf)^r\ar[d]\\
\displaystyle\prod_{j=1}^r(S^1)^{\lambda_j}\ar[urr]^-{\prod\epsilon^{\lambda_j}}\ar[rr]^-{\prod\iota^{\lambda_j}}\ar[drr]_-{\prod\varphi_{\lambda_j}}&&\displaystyle\prod_{j=1}^r\Conf_2(\mathbb{R}^2)^{\lambda_j}\ar[d]&&\Conf_k(\mathbb{R}^2)^r\ar[d]\\
&&\Conf_{2d}(\mathbb{R}^2)^r\ar@{=}[rr]&&\Conf_{2d}(\mathbb{R}^2)^r,
}\] in which the rectangular and lower triangular subdiagrams are commutative. Moreover, Lemmas \ref{lem:two points} and \ref{lem:star is circle} and Corollary \ref{cor:degree} imply that the upper triangular subdiagram is homotopy commutative.

\begin{proof}[Proof of Theorem \ref{thm:main}]
The cases $m(\graf)\in \{0,1\}$ are easily treated by other means, so we assume that $m(\graf)\geq 2$. In light of this inequality and the assumption that $k\geq4$, there is an integer $d$ satisfying the inequalities $1<d\leq m(\graf)$ and $2d\leq k$. It suffices to show that $\mathrm{TC}_r(\Conf_k(\graf))\geq rd$ for every such $d$. 

By Lemma \ref{lem:orthogonal exist}, we may find binary partitions $\lambda_j$ of $\{1,\ldots, 2d\}$ for $1\leq j\leq r$ with $\lambda_1\perp \lambda_2$. Considering the diagram shown above, it follows from orthogonality and Lemma \ref{lem:orthogonal} that the bottom diagonal map is a witness for the product class $\zeta^{12}_{\lambda_1}\zeta^{12}_{\lambda_2}\prod_{j=3}^r\zeta_{\lambda_j}^{(j-1)j}$. By commutativity and Lemma \ref{lem:witness}, it follows that $\mathrm{zcl}_r(\Conf_k(\graf))\geq rd$, whence $\mathrm{TC}_r(\Conf_k(\graf))\geq rd$ by Theorem \ref{thm:tc inequalities}.
\end{proof}

\begin{proof}[Proof of Theorem \ref{thm:farber}]
The assumption and Theorem \ref{thm:main} gives the lower bound. The upper bound bound is implied by Theorem \ref{thm:tc inequalities}, since $\Conf_k(\graf)$ has the homotopy type of a CW complex of dimension $m(\graf)$ for $k\geq m(\graf)$ by \cite{Swiatkowski:EHDCSG}.
\end{proof}

\section{Non-planar graphs}\label{section:non-planar graphs}

The proof of Theorem \ref{thm:farber}, as with its predecessors \cite{Aguilar-GuzmanGonzalezHoekstra-Mendoza:FSMTMHTCOCST,Farber:CFMPG,LuetgehetmannRecio-Mitter:TCCSFAGBG}, is premised on the recognition that an essential vertex and a pair of particles give rise to a circle (Lemma \ref{lem:star is circle}), whence $d$ essential vertices and $2d$ particles give rise to a torus of dimension $d$. When $d=m(\graf)$, this dimension is that of the configuration space at large, and one shows that the bounds of Theorem \ref{thm:tc inequalities} coincide by building long cup products out of cohomology classes detecting circle factors of the torus.

The following result, and Corollary \ref{cor:cohomologically planar} below, show that no such approach can possibly succeed in the non-planar case.

\begin{theorem}\label{thm:non-planar}
A graph $\graf$ is non-planar if and only if there is a topological embedding $\stargraph{3}\to \graf$ inducing the zero homomorphism on $H_1(\Conf_2(-);\mathbb{Z})$.
\end{theorem}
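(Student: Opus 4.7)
The plan is to prove the two implications separately, using Kuratowski's theorem for the non-planar direction. \emph{Planarity implies no zero embedding.} Suppose $\graf$ is planar, with an embedding $\iota\colon \graf\hookrightarrow \mathbb{R}^2$, and let $\varphi\colon \stargraph{3}\to \graf$ be any topological embedding. An ambient isotopy of $\iota(\graf)$, which does not affect the induced map on $H_1$, allows us to assume that $\iota\circ \varphi$ is piecewise smooth; by precomposing $\varphi$ with a self-homeomorphism of $\stargraph{3}$---which acts on $H_1(\Conf_2(\stargraph{3}))=\mathbb{Z}$ by $\pm 1$---we can further arrange for $\iota\circ \varphi$ to be orientation preserving. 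Proposition \ref{prop:star is euclidean} then guarantees that the composite
\[
S^1 \xrightarrow{\epsilon} \Conf_2(\stargraph{3}) \xrightarrow{\iota\circ\varphi} \Conf_2(\mathbb{R}^2) \xrightarrow{\gamma} S^1
\]
has degree $1$. Since $[\epsilon]$ generates $H_1(\Conf_2(\stargraph{3});\mathbb{Z})\cong \mathbb{Z}$ by Lemma \ref{lem:star is circle}, the map $(\iota\circ\varphi)_*$ on $H_1$ is nonzero, and therefore so is $\varphi_*$.

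\emph{Non-planarity implies a zero embedding exists.} By Kuratowski's theorem, a non-planar graph $\graf$ contains a subgraph $\graf'\subseteq \graf$ homeomorphic to a subdivision of $K_5$ or of $K_{3,3}$. Since $\Conf_2$ depends only on the homeomorphism type, it suffices to exhibit, for each of $K_5$ and $K_{3,3}$, a topological embedding $\stargraph{3}\to K_5$ (resp.\ $\stargraph{3}\to K_{3,3}$) inducing zero on $H_1(\Conf_2(-);\mathbb{Z})$; composing with the Kuratowski inclusion then yields the desired embedding into $\graf$. The natural candidate is the $\stargraph{3}$ subgraph formed by three edges meeting at a single vertex. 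One must then show that the class $[\epsilon]$ vanishes in $H_1(\Conf_2(K_{3,3}))$ (resp.\ $H_1(\Conf_2(K_5))$), which I would do by producing an explicit $2$-chain in the discretized (Abrams) cube model: the additional vertices and edges of the Kuratowski graph provide alternative routes for the second particle during the six-step trajectory of $\epsilon$, and the differences between successive reroutings are $2$-cells that assemble into a chain whose boundary is $\epsilon$.

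The step I expect to be most delicate is this last combinatorial construction. A suggestive but non-conclusive motivation is that every non-planar graph embeds in a closed orientable surface $\Sigma$ of positive genus, and the star class vanishes in $H_1(\Conf_2(\Sigma);\mathbb{Z})$: its restriction to the fiber $\Sigma\setminus\mathrm{pt}$ of the projection $\Conf_2(\Sigma)\to\Sigma$ represents the meridian around the puncture, which is a commutator $[a,b]$ of standard generators and hence null-homologous. The map $H_1(\Conf_2(\graf))\to H_1(\Conf_2(\Sigma))$ is not in general injective, so this argument does not directly kill the star class in $\Conf_2(\graf)$; nevertheless, it predicts that the explicit bounding chain should be realizable using only the topology of the graph itself.
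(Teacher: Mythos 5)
The first half of your argument (planarity implies every embedded $\stargraph{3}$ carries the star class nontrivially) is correct and is essentially the paper's: it is a direct application of Proposition \ref{prop:star is euclidean}, and your care with orientation and piecewise smoothness is appropriate. The reduction of the converse to $K_5$ and $K_{3,3}$ via Kuratowski also matches the paper. The gap is in the only substantive step of that converse: you assert that the star class $[\epsilon]$ vanishes in $H_1(\Conf_2(K_5);\mathbb{Z})$ and $H_1(\Conf_2(K_{3,3});\mathbb{Z})$ and propose to verify this by exhibiting an explicit bounding $2$-chain in Abrams' cube complex, but you never construct that chain, and the heuristic you offer for why it should exist---``additional vertices and edges provide alternative routes for the second particle''---proves too much. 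A large planar graph also offers the second particle plenty of alternative routes, yet there the star class is provably nonzero by your own first half. So the sketch, as stated, contains no mechanism that actually detects non-planarity and cannot be completed without one. The surface-embedding motivation has the same defect: every planar graph also embeds in a torus, where the meridian argument applies verbatim, so it carries no information about the graph-level class (as you partly acknowledge).

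The paper fills this gap with an algebraic argument whose ingredients are absent from your proposal. A chain-level computation in the ordered \'{S}wi\k{a}tkowski complex of the lollipop graph gives the $2$Q-relation $\sigma=\beta_1+\beta_2-\beta_{12}$ (Lemma \ref{lem:Q}); applying it at both vertices of the theta graph gives $\sigma_1=\sigma_2$ (Corollary \ref{cor:theta}); propagating this identity through the theta subgraphs of $K_5$ and $K_{3,3}$, as in the unordered argument of An--Drummond-Cole--Knudsen, yields $\sigma=-\sigma$, i.e.\ $2\sigma=0$; and one concludes $\sigma=0$ only by invoking Ko--Park's theorem that $H_1(\Conf_2(\graf);\mathbb{Z})$ is torsion-free. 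Note that the vanishing is therefore not established by a purely local combinatorial move: it passes through a $2$-torsion statement plus a nontrivial global torsion-freeness result. If you insist on the explicit-chain route, you would in effect be re-deriving this computation by hand in the cube complex of a suitable subdivision of $K_5$ or $K_{3,3}$; that is feasible in principle, but it is exactly the delicate step you have deferred, and your proposal gives no indication of how the bounding chain would differ from the (nonexistent) one in the planar case.
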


Theorem \ref{thm:non-planar} is more or less implicit in the work of Ko--Park \cite{KoPark:CGBG}, but we believe the simple argument below to be of independent value. This argument is premised on certain atomic relations in $H_1(\Conf_2(\graf);\mathbb{Z})$, which become twice the Q- and $\Theta$-relations of \cite{AnDrummond-ColeKnudsen:SSGBG} after projection to the unordered configuration space. We refer to the graphs depicted in Figure \ref{fig:graph examples}.

\begin{figure}[ht]
\begin{tikzpicture}
\begin{scope}[xshift=4cm]
\fill[black] (0,-.5) circle (2.5pt);
\fill[black] (0,-1) circle (2.5pt);
\draw(0,0) circle (.5cm);
\draw(0,-1) -- (0,-.5);
\draw(0,-1.2) node[below]{$\lollipopgraph{}$};
\end{scope}
\begin{scope}[xshift=8cm]
\fill[black] (0,-.5) circle (2.5pt);
\fill[black] (0,.5) circle (2.5pt);
\draw(0,0) circle (.5cm);
\draw(0,.5) -- (0,-.5);
\draw(0,-1.2) node[below]{$\thetagraph{3}$};
\end{scope}

\end{tikzpicture}
\caption{The lollipop graph and the theta graph}\label{fig:graph examples}
\end{figure}
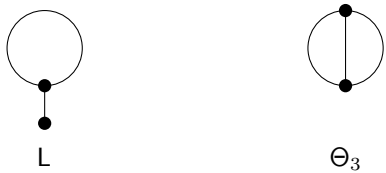

We distinguish four classes $H_1(\Conf_2(\graphfont{L});\mathbb{Z})$ (in fact, these classes span subject to the single relation below). The \emph{star class} $\sigma$ is defined as in Section \ref{section:stars and circles} by allowing the two particles to orbit one another clockwise by passing through the essential vertex; the class $\beta_i$ for $i=1,2$ is defined by allowing the $i$th particle to traverse the cycle counterclockwise while the other particle remains stationary on the leaf; and the class $\beta_{12}$ is defined by allowing both particles to traverse the cycle antipodally counterclockwise.

\begin{lemma}[2Q-relation]\label{lem:Q}
In $H_1(\Conf_2(\graphfont{L});\mathbb{Z})$, there is the relation \[\sigma=\beta_1+\beta_2-\beta_{12}.\]
\end{lemma}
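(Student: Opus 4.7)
The strategy is to construct an explicit singular $2$-chain in $\Conf_2(\graphfont{L})$ whose boundary realizes the combination $\sigma - \beta_1 - \beta_2 + \beta_{12}$. Writing $C$ for the cycle and $e$ for the leaf, with essential vertex $v = C \cap e$, observe that the naive map $C \times C \to \graphfont{L}^2$ is a closed $2$-surface but fails to land in $\Conf_2(\graphfont{L})$ along the collision diagonal $\Delta = \{(x,x) : x \in C\}$. The plan is to modify this map near $\Delta$ by briefly pushing one particle onto the leaf, producing a $2$-chain with the desired boundary.

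Concretely, I would parametrize a cylinder $[0,1]_s \times (\mathbb{R}/\mathbb{Z})_\theta$, with particle $1$ on the cycle at angle $\theta$ and particle $2$'s position determined by $s$. At $s = 0$ particle $2$ sits at a fixed leaf point, giving $\beta_1$ as $\theta$ varies; at $s = 1$ particle $2$ lies antipodal to particle $1$ on the cycle, giving $\beta_{12}$. For intermediate $s$, the second particle slides from the leaf through $v$ onto the cycle. The unavoidable trouble occurs when particle $1$ itself passes through $v$: there I would excise a small open disk in the $(s,\theta)$-plane and cap it off by a $2$-chain witnessing the star move, contributing $\sigma$ (from one particle's excursion through $v$) and $\beta_2$ (from a compensatory cycle traversal by the other particle) to the boundary with the appropriate signs.

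The main obstacle is the signed bookkeeping: verifying that the interior of the modified surface really is collision-free and that each of the four boundary components appears with the claimed loop class and correct sign. A cleaner and more systematic alternative is to pass to Abrams' discrete configuration space model, after subdividing $\graphfont{L}$ finely enough that the two particles always occupy non-adjacent cells. There the $2$-cells are products of disjoint closed edges with explicit combinatorial boundary maps, and the relation reduces to a small chain-level identity involving at most a handful of cells, making the orientation bookkeeping routine and localizing the proof entirely within a neighborhood of the essential vertex.
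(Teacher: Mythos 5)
Your fallback route---a chain-level verification in a discretized combinatorial model---is essentially what the paper does: its proof consists of representing $\sigma$, $\beta_1$, $\beta_2$, $\beta_{12}$ by explicit $1$-cycles in the ordered {\'{S}}wi\k{a}tkowski complex of $\graphfont{L}$ and observing that the identity holds on the nose at the chain level. Abrams' cube complex would serve the same purpose after sufficient subdivision, at the cost of many more cells; the {\'{S}}wi\k{a}tkowski model is preferred precisely because it concentrates the interesting generators at the essential vertex and makes the inspection immediate. So that half of your proposal is sound in principle, but you have not actually exhibited the combinatorial $2$-chain, which is the entire content of the lemma.

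Your primary, geometric construction has a genuine flaw. The picture of excising a small open disk around the collision and capping it off is borrowed from two-dimensional targets such as $\mathbb{R}^2$, where the diagonal of $X\times X$ has codimension $2$, so a generic $2$-parameter family meets it in isolated points whose links are circles. In the graph $\graphfont{L}$ the diagonal of $\graphfont{L}\times\graphfont{L}$ has codimension $1$. Concretely, for your cylinder $[0,1]_s\times S^1_\theta$ with the natural interpolation (particle $2$ slides down the leaf, through $v$, and counterclockwise to the antipode of particle $1$), particle $2$ must cross particle $1$'s position at some intermediate $s$ for an entire interval of values of $\theta$; the collision locus is a $1$-dimensional arc, not a point, and there is no small disk to excise. (The interpolation is moreover discontinuous at the value of $\theta$ whose antipode is $v$, where particle $2$'s target wraps around the cycle.) This is not an artifact of your particular parametrization: since $\beta_1$ and $\beta_{12}$ are not homologous in $\Conf_2(\graphfont{L})$, any cylinder in $\graphfont{L}\times\graphfont{L}$ joining them must meet the diagonal, and it meets it in a $1$-dimensional set. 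Finally, the assertion that the cap contributes exactly $\sigma$ and $\beta_2$ with the correct signs is precisely the statement being proved, not a step toward it. A correct geometric argument must re-route the family across a codimension-$1$ wall, and the cost of that re-routing is exactly what the combinatorial models are designed to compute.
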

\begin{proof}
By inspection, the relation holds at the chain level in the ordered {\'{S}}wi\k{a}tkowski complex---see \cite[\S 2.1]{ChettihLuetgehetmann:HCSTL}, for example.
\end{proof}

Write $\sigma_1$ for the star class in $H_1(\Conf_2(\thetagraph{3});\mathbb{Z})$ at the top vertex and $\sigma_2$ for the star class at the bottom vertex, both oriented clockwise. Applying the 2Q-relation twice, we obtain the following relation.

\begin{corollary}[2$\Theta$-relation]\label{cor:theta}
In $H_1(\Conf_2(\thetagraph{3});\mathbb{Z})$, there is the relation \[\sigma_1=\sigma_2.\]
\end{corollary}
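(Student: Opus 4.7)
The plan is to apply Lemma \ref{lem:Q} twice, once at each essential vertex of $\thetagraph{3}$, and then identify the resulting ``non-star'' terms on the right-hand side in $H_1(\Conf_2(\thetagraph{3});\mathbb{Z})$.

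Label the three edges of $\thetagraph{3}$ by $e_1, e_2, e_3$, with common endpoints $u$ and $v$. First I would choose two lollipop subgraphs: let $\graphfont{L}_u \subseteq \thetagraph{3}$ consist of the cycle $C = e_1 \cup e_2$ together with a short closed subinterval of $e_3$ having $u$ as one endpoint, and let $\graphfont{L}_v$ be defined analogously at the other end of $e_3$. Then $\graphfont{L}_u$ and $\graphfont{L}_v$ are lollipops with essential vertices $u$ and $v$, respectively, sharing the common cycle $C$. Applying Lemma \ref{lem:Q} in each lollipop and pushing forward to $H_1(\Conf_2(\thetagraph{3});\mathbb{Z})$ yields
\begin{equation*}
\sigma_1 = \beta_1^u + \beta_2^u - \beta_{12}^u, \qquad \sigma_2 = \beta_1^v + \beta_2^v - \beta_{12}^v,
\end{equation*}
where the superscripts indicate the lollipop of origin.

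Next I would identify the images of the two right-hand sides term by term. The antipodal class $\beta_{12}^*$ is represented by a 1-cycle supported in $\Conf_2(C)$ and involves no points of $e_3$; with a single fixed counterclockwise orientation on $C$ this gives $\beta_{12}^u = \beta_{12}^v$. For the one-particle-moving classes, $\beta_i^u$ has its stationary particle on the short leaf segment of $\graphfont{L}_u$ in $e_3$ near $u$, while $\beta_i^v$ has it near $v$; sliding the stationary particle along the open edge $e_3$ while the moving particle continues to traverse $C$ produces an explicit singular $2$-chain in $\Conf_2(\thetagraph{3})$ exhibiting $\beta_i^u = \beta_i^v$. No collision occurs during this slide since $e_3$ meets $C$ only at $u$ and $v$, which the stationary particle avoids. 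Combining these identifications with the two displayed relations yields $\sigma_1 = \sigma_2$.

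The main obstacle is sign bookkeeping: one must confirm that the ``clockwise'' conventions defining $\sigma_1$ at $u$ and $\sigma_2$ at $v$ are each consistent with the single fixed counterclockwise orientation on $C$ when Lemma \ref{lem:Q} is applied in the respective lollipop. I would handle this by fixing a planar embedding of $\thetagraph{3}$ and observing that the cyclic orderings of edges induced at $u$ and $v$ are reversed, which is precisely the compatibility needed for the signs arising from the two applications of Lemma \ref{lem:Q} to combine into equality $\sigma_1 = \sigma_2$ rather than $\sigma_1 = -\sigma_2$.
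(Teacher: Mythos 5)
Your proof is correct and is exactly the paper's argument: the paper's entire justification is the one-line remark ``Applying the 2Q-relation twice, we obtain the following relation,'' and your write-up supplies precisely the omitted details (the two lollipop subgraphs at $u$ and $v$, the sliding $2$-chain identifying the $\beta$-classes, and the orientation bookkeeping, which you resolve correctly). No discrepancy with the paper's approach.
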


\begin{proof}[Proof of Theorem \ref{thm:non-planar}] The ``if'' direction is an easy consequence of Proposition \ref{prop:star is euclidean}. For the converse, suppose that $\graf$ is non-planar. By Kuratowski's theorem, we may assume that $\graf$ is the complete graph $\completegraph{5}$ or the complete bipartite graph $\completegraph{3,3}$. In either case, via Corollary \ref{cor:theta}, the same argument employed in the unordered setting in \cite[Lem. C.4]{AnDrummond-ColeKnudsen:SSGBG} shows that $\sigma=-\sigma$ for any star class $\sigma$ in $\graf$. Since $H_1(\Conf_2(\graf);\mathbb{Z})$ is torsion-free \cite[Cor. 3.22]{KoPark:CGBG}, the claim follows.
\end{proof}

In what follows, a \emph{topological spanning tree} is a subspace of a graph homeomorphic to a tree and containing an open neighborhood of every essential vertex.

\begin{corollary}\label{cor:cohomologically planar}
A graph $\graf$ is planar if and only if there is a topological spanning tree $T\subseteq \graf$, a piecewise smooth embedding $\varphi:T\to \mathbb{R}^2$, and a cohomology class $\alpha^\graf_{12}\in H^1(\Conf_2(\graf))$ such that $\alpha^\graf_{12}|_{T}=\varphi^*\alpha_{12}$.
\end{corollary}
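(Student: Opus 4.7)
The forward direction is immediate: if $\graf$ is planar, fix a piecewise smooth embedding $\Phi:\graf\to\mathbb{R}^2$, pick any topological spanning tree $T\subseteq\graf$, and set $\varphi:=\Phi|_T$ and $\alpha^\graf_{12}:=\Phi^*\alpha_{12}$; naturality of pullback yields the required compatibility $\alpha^\graf_{12}|_T=\varphi^*\alpha_{12}$.

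For the converse I plan to argue the contrapositive, deriving a contradiction from the existence of a triple $(T,\varphi,\alpha^\graf_{12})$ when $\graf$ is non-planar. By Kuratowski's theorem, $\graf$ contains a subgraph $\graf'$ homeomorphic to a subdivision of $\completegraph{5}$ or $\completegraph{3,3}$. Pick a branch vertex $v$ of $\graf'$, which is necessarily essential in $\graf$; since $T$ contains an open neighborhood of $v$, the initial segments of three edges of $\graf'$ at $v$ assemble into an embedding $\iota_0:\stargraph{3}\hookrightarrow T$, and after relabeling the branches the composite $\varphi\circ\iota_0$ is an orientation-preserving piecewise smooth embedding into $\mathbb{R}^2$. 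Writing $\sigma\in H_1(\Conf_2(\stargraph{3});\mathbb{F}_2)$ for the star class and $\iota:\stargraph{3}\hookrightarrow\graf$ for the composite inclusion, Proposition \ref{prop:star is euclidean} together with the hypothesis $\alpha^\graf_{12}|_T=\varphi^*\alpha_{12}$ yields
\[\langle\alpha^\graf_{12},\iota_*\sigma\rangle=\langle\varphi^*\alpha_{12},\sigma\rangle=1.\]

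To produce the desired contradiction, I would invoke the argument in the proof of Theorem \ref{thm:non-planar} applied to $\graf'$, which shows that every star class vanishes in $H_1(\Conf_2(\graf');\mathbb{Z})$, and hence in $H_1(\Conf_2(\graf);\mathbb{F}_2)$ after pushforward along $\graf'\hookrightarrow\graf$ and mod-$2$ reduction. A brief isotopy inside $\graf$ lengthening the short arms of our $\stargraph{3}\subseteq T$ into full $\graf'$-edges identifies $\iota_*\sigma$ with such a pushforward, forcing $\iota_*\sigma=0$ and contradicting the pairing above. The step requiring the most care is exactly this identification, since the small $\stargraph{3}$ in $T$ is formally distinct from the full star in $\graf'$; the spanning-tree hypothesis supplies the local compatibility at $v$ needed to interpolate between the two, after which the argument collapses to a direct pairing computation.
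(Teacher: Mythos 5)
Your proposal is correct and follows essentially the same route as the paper: both derive a contradiction between the nontrivial pairing of $\varphi^*\alpha_{12}$ with the star class at an essential vertex (Proposition \ref{prop:star is euclidean}) and the vanishing of that star class in a non-planar graph. The only cosmetic difference is that you unpack Theorem \ref{thm:non-planar} inline (Kuratowski, the $2\Theta$-relation, torsion-freeness of $H_1(\Conf_2(\graf);\mathbb{Z})$) and run the shrink/expand isotopy in the opposite direction, whereas the paper simply cites that theorem and shrinks its star into $T$.
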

\begin{proof}
If $\graf$ admits the (without loss of generality) piecewise smooth planar embedding $\psi$, then we may choose $T\subseteq \mathbb{R}^2$ arbitrarily and set $\varphi=\psi|_T$ and $\alpha_{12}^\graf=\psi^*\alpha_{12}$. If $\graf$ is non-planar, we may consider the embedding $\stargraph{3}\to \graf$ supplied by Theorem \ref{thm:non-planar}, whose image we may take to lie in $T$ without loss of generality. Our assumptions give rise to the following commutative diagram
\[\xymatrix{
&H_1(\Conf_2(\stargraph{3});\mathbb{Z})\ar[dl]\ar[dr]^-0\\
H_1(\Conf_2(T);\mathbb{Z})\ar[d]_-\varphi\ar[rr] &&H_1(\Conf_2(\graf);\mathbb{Z})\ar[d]^-{\alpha_{12}^\graf}\\
H_1(\Conf_2(\mathbb{R}^2);\mathbb{Z})\ar[rr]^-{\alpha_{12}}&&\mathbb{F}_2.
}\] Since the counterclockwise composite is nonzero by Proposition \ref{prop:star is euclidean}, we obtain a contradiction.
\end{proof}

\bibliographystyle{plain}
\bibliography{references}

\begin{thebibliography}{10}

\bibitem{Abrams:CSBGG}
A.~Abrams.
\newblock {\em Configuration spaces of braid groups of graphs}.
\newblock PhD thesis, UC Berkeley, 2000.

\bibitem{Aguilar-GuzmanGonzalezHoekstra-Mendoza:FSMTMHTCOCST}
{Aguilar-Guzm\'{a}n, J. and Gonz\'{a}lez, J. and Hoekstra-Mendoza, T.}
\newblock Farley--sabalka's morse-theory model and the higher topological
  complexity of ordered configuration spaces of trees.
\newblock arXiv:1911.12522, 2019.

\bibitem{AnDrummond-ColeKnudsen:SSGBG}
B.~H. An, G.C. Drummond-Cole, and B.~Knudsen.
\newblock Subdivisional spaces and graph braid groups.
\newblock {\em Doc. Math.}, 24:1513--1583, 2019.

\bibitem{BasabeGonzalezRudyakTamaki:HTCS}
I.~Basabe, J.~{Gonz\'{a}lez}, Y.~Rudyak, and D.~Tamaki.
\newblock Higher topological complexity and its symmetrization.
\newblock {\em Algebr. Geom. Topol.}, 14.

\bibitem{ChettihLuetgehetmann:HCSTL}
S.~Chettih and D.~L{\"{u}}tgehetmann.
\newblock The homology of configuration spaces of trees with loops.
\newblock {\em Alg. Geom. Topol.}, 18(4):2443--2469, 2018.

\bibitem{CohenFarber:TCCFMPS}
D.~Cohen and M.~Farber.
\newblock Topological complexity of collision-free motion planning on surfaces.
\newblock {\em Compos. Math.}, 147, 2009.

\bibitem{Farber:TCMP}
M.~Farber.
\newblock Topological complexity of motion planning.
\newblock {\em Discrete Comput. Geom.}, 29:211--221, 2003.

\bibitem{Farber:CFMPG}
M.~Farber.
\newblock Collision free motion planning on graphs.
\newblock In M.~Erdmann, M.~Overmars, D.~Hsu, and F.~van~der Stappen, editors,
  {\em Algorithmic Foundations of Robotics VI}, volume~17 of {\em Springer
  Tracts Adv. Robot.}, pages 123--138. Springer, 2005.

\bibitem{Farber:CSRMPA}
M.~Farber.
\newblock Configuration spaces and robot motion planning algorithms.
\newblock {\em Lecture Notes Series, Institute for Mathematical Sciences}, 35,
  2017.

\bibitem{FarberGrant:TCCS}
M.~Farber and M.~Grant.
\newblock Topological complexity of configuration spaces.
\newblock {\em Proc. Amer. Math. Soc}, 137, 2009.

\bibitem{Ghrist:CSBGGR}
R.~Ghrist.
\newblock Configuration spaces and braid groups on graphs in robotics.
\newblock In {\em Knots, braids, and mapping class groups---papers dedicated to
  {J}oan {S}. {B}irman (New York, 1998)}, volume~24 of {\em AMS/IP Stud. Adv.
  Math.}, pages 29--40. Amer. Math. Soc., 2002.

\bibitem{KoPark:CGBG}
Ko.~K. H. and H.W. Park.
\newblock Characteristics of graph braid groups.
\newblock {\em Discrete Comput. Geom.}, 48:915--963, 2012.

\bibitem{LuetgehetmannRecio-Mitter:TCCSFAGBG}
D.~Luetgehetmann and D.~Recio-Mitter.
\newblock Topological complexity of configuration spaces of fully articulated
  graphs and banana graphs.
\newblock {\em Discrete Comput. Geom.}, 2019.

\bibitem{Rudyak:HATC}
Y.~Rudyak.
\newblock On higher analogues of topological complexity.
\newblock {\em Topology Appl.}, 157:916--920, 2010.

\bibitem{Scheirer:TCUCSCG}
S.~Scheirer.
\newblock Topological complexity of unordered configuration spaces of certain
  graphs.
\newblock {\em Topol. Appl}, 2020.

\bibitem{Sinha:HLDO}
D.~Sinha.
\newblock The homology of the little disks operad.
\newblock arXiv:0610236, 2010.

\bibitem{Swiatkowski:EHDCSG}
J.~{\'{S}}wi\k{a}tkowski.
\newblock Estimates for homological dimension of configuration spaces of
  graphs.
\newblock {\em Colloq. Math.}, 89(1):69--79, 2001.

\end{thebibliography}

\end{document}